\documentclass[oneside,reqno]{amsart} 

\newcounter{version}
\newcounter{short}
\setcounter{short}{1}
\newcounter{long}
\setcounter{long}{2}
\setcounter{version}{\value{long}}

%

\usepackage{amsmath,amsthm,amscd}
\usepackage{amssymb,amsfonts} 


\theoremstyle{definition}
\newtheorem{dfn}{Definition}[section]

\newtheorem{rem}[dfn]{Remark}
\theoremstyle{plain}
\newtheorem{thm}[dfn]{Theorem}
\newtheorem{prp}[dfn]{Proposition}
\newtheorem{cor}[dfn]{Corollary}
\newtheorem{lem}[dfn]{Lemma}

\newcommand\C{{\mathbb C}}
\newcommand\R{{\mathbb R}}
\newcommand\Z{{\mathbb Z}}
\newcommand\N{{\mathbb N}}

\newcommand\A{{\mathbb A}}
\newcommand\Sp{{\mathbb S}}

\newcommand\LL{{\mathcal L}}
\newcommand\CC{{\mathcal C}}
\newcommand\HH{{\mathcal H}}

\newcommand\DD{{{\mathcal D}_X}}
\newcommand\Dh{{\mathcal D}_{\mathcal H}}
\newcommand\DG{{{\mathcal D}_G}}
\newcommand\DS{{{\mathcal D}_{\Sp^{N-1}}}}

\newcommand\m{{\mathfrak m}}

\newcommand\id{{\rm Id}}
\newcommand\Ind{{\rm Ind}}
\newcommand\ad{{\rm ad\,}}
\newcommand\ch{{\rm char\,}}
\newcommand\Hom{{\rm Hom}}
\newcommand\Ker{{\rm Ker\,}}
\newcommand\IM{{\rm Im\,}}
\newcommand\gr{{\rm gr\,}}

\newcommand\Der{{\rm Der\,}}
\newcommand\Jac{{\rm Jac}}
\newcommand\del{\partial}
\newcommand\rank{{\rm rank}}
\newcommand\detab{{\det(\alpha, \beta)}}
\newcommand\sing{{\rm sing}}
\newcommand\Ising{I_\sing}

\newcommand\whp{{\widehat \pi}}
\newcommand\HD{{\widehat{\mathcal D}}}
\newcommand\grD{{{\rm gr}\, \HD}}

\newcommand\dx{{\frac{\partial}{\partial x}}}
\newcommand\dy{{\frac{\partial}{\partial y}}}

\newcommand\me{{\mathfrak m}_e}
\newcommand\Te{\left( \me / \me^2 \right)^*}
\newcommand\Tf{\me / \me^2 }
\newcommand\wv{\widehat{\varphi}}

\newcommand\T{T}
\newcommand\Ha{H}

\newcommand\GCD{gcd}

\date{}
\begin{document}

\

\title
[Lie algebras of vector fields]
{Lie algebras of vector fields on smooth  affine varieties}
\author{Yuly Billig}
\address{School of Mathematics and Statistics, Carleton University, Ottawa, Canada}
\email{billig@math.carleton.ca}
\author{Vyacheslav Futorny}
\address{ Instituto de Matem\'atica e Estat\'\i stica,
Universidade de S\~ao Paulo,  S\~ao Paulo,
 Brasil}
 \email{futorny@ime.usp.br}
\let\thefootnote\relax\footnotetext{{\it 2010 Mathematics Subject Classification.}
Primary 17B20, 17B66; Secondary 13N15.}

\begin{abstract}
We  reprove the results of Jordan \cite{Jo} and Siebert \cite{S} and show that the Lie algebra  of polynomial vector fields on an irreducible affine variety $X$  is simple if and only if $X$ is a smooth variety.  Given proof is self-contained and does not depend on papers mentioned above. Besides, the structure  of the module of  polynomial functions 
 on an irreducible smooth affine variety over the Lie algebra of vector fields is studied. 
Examples of Lie algebras of polynomial vector fields on  an $N$-dimensional sphere, non-singular hyperelliptic curves and linear algebraic groups are considered. 
\end{abstract}

\maketitle

\section{Introduction.}

Classification of complex simple finite dimensional Lie algebras by Killing (1889), \cite{Ki} and Cartan (1894) \cite{C1} shaped the development of Lie theory in the first half of the 20th century.  
Since Sophus Lie the Lie groups and corresponding Lie algebras (as infinitesimal transformations) were related to the symmetries of geometric structures which need not be finite dimensional.  
Later infinite dimensional Lie groups and algebras were connected with the symmetries of systems which have an infinite
number of independent degrees of freedom, for example in Conformal
Field Theory, e.g. \cite{BPZ}, \cite{TUY}.  The discovery of first four classes of simple infinite dimensional Lie algebras goes back to Sophus Lie who introduced certain pseudo groups of transformations in small dimensions. 
This work was completed by Cartan who showed that corresponding simple Lie algebras are of type  
$W_n$, $S_n$, $H_n$ and $K_n$ \cite{C2}. These four classes of Cartan type algebras were the first examples of simple infinite dimensional Lie algebras. The general theory of infinite dimensional Lie algebras at large is still undeveloped. 

  Guillemin and Sternberg \cite{GS},  Singer and Sternberg \cite{SS} and Weisfeiler \cite{W} related Cartan's classification to the study of infinite dimensional $\mathbb Z$-graded Lie algebras of finite depth. 
Generalizing these results Kac \cite{K1} classified all simple complex $\mathbb Z$-graded Lie algebras of finite growth under certain technical restriction.  Besides four Cartan type series, loop Lie algebras  
(whose universal central extensions are affine Kac-Moody algebras) appeared in his classification opening a new theory of Kac-Moody algebras. Finally, Mathieu \cite{M} completed the classification of all 
simple complex infinite dimensional  $\mathbb Z$-graded Lie algebras of finite growth by adding to the list the first Witt algebra (centerless Virasoro algebra) which was conjectured by Kac. 
The latter algebra is 
the Lie algebra of polynomial vector fields on a circle.  Well-known generalized Witt algebras is  
  a natural generalization of Virasoro algebra. These are simple Lie algebras of polynomial vector fields on $n$-dimensional torus. We refer to \cite{BF1}, \cite{BF2} for recent developments in the representation theory of these algebras.

All smooth vector fields $X(M)$  on some manifold $M$ form a natural infinite dimensional Lie algebra. 
 These Lie algebras and their cohomologies have many important properties and applications  \cite{FG}. 
The  Lie algebra $X(M)$  is not simple in general.  The ideal structure of these Lie algebras was studies by many authors \cite{G1}, \cite{G2}, \cite{SP}, \cite{V}.
For a compact manifold $M$, Shanks and Pursell \cite{SP} showed that maximal ideals
of $X(M)$ consist of those vector fields which are
flat at a fixed point of $M$. A larger class of the Lie algebras of vector
fields (vector fields tangent to a given foliation
or analytic) was studied by Grabowski \cite{G3} who showed that the maximal ideals are always modules over the rings of the
corresponding class of functions as was conjectured by Vanzura \cite{V}.


Recall that Cartan type Lie algebras can be constructed as the derivation algebra of the polynomial algebra in several
variables (type $W_n$) and its subalgebras preserving certain differential forms in other types. Similarly,  the generalized Witt algebras are the derivation algebras of the Laurent polynomial algebra. Attempts to construct new generalized Cartan type algebras using this approach go back to the work of Kac \cite{K}.  
New simple Lie algebras of generalized Witt type were constructed by 
Kawamoto \cite{Ka}  
by replacing the polynomial algebra  by the group algebra
of an additive subgroup of a  vector space over the field of characteristic $0$ and considering the Lie subalgebra of its 
derivation algebras generated by the grading operators. Based on these ideas, Osborn \cite {O} constructed four new series of generalized simple Lie algebras of Cartan
type followed by further generalizations in \cite{DZ}, \cite{X1}, \cite{X2}, \cite{J}, \cite{IKN}, \cite{P}.  

A significant breakthrough were two papers of Jordan \cite{Jo}, \cite{Jo1} where the author gave sufficient conditions for the simplicity of the derivation algebra of a commutative, associative  unital $k$-algebra 
$R$  over a field  $k$. If $D$ is both a Lie subalgebra
and $R$-submodule of $Der_k R$ such that $R$ is $D$-simple (that is $R$ has no nontrivial $D$-invariant ideals) and either $char k \neq 2$ or $D$
is not cyclic as an $R$-module or $D(R) = R$, then  $D$ is simple. 
 The result of Jordan was completed  to the criterion  of  simplicity of the Lie algebra of polynomial vector fields on 
an irreducible affine variety by Siebert in \cite{S}. 

In this paper we reprove the results of Jordan and Siebert and give a self-contained proof of the following 

\begin{thm}[\cite{Jo}, \cite{S}]\label{thm-introd}
Let $k$ be an algebraically closed field of characteristic $0$, and let $X \subset \A_k^n$ be an irreducible affine variety. 
 The Lie algebra $\DD$ of polynomial vector fields on $X$ is simple if and only if $X$ is a smooth variety.
\end{thm}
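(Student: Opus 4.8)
The plan is to prove the two implications separately, writing $R=k[X]$ for the (finitely generated, integral) coordinate ring, so that $\DD=\Der_k R\cong\Hom_R(\Omega_{R/k},R)$ is simultaneously a Lie algebra and a finitely generated torsion-free $R$-module; I assume $\dim X\ge 1$, since a point is smooth but then $\DD=0$, which is not simple under the usual convention. The notion linking geometry to algebra is that of a $\Der$-stable ideal of $R$, equivalently a $\DD$-submodule of $R$: call $R$ \emph{$\DD$-simple} if $0$ and $R$ are its only $\Der$-stable ideals. The ``if'' part is obtained in two steps, following Jordan: (a) $X$ smooth $\Rightarrow$ $R$ is $\DD$-simple; (b) $R$ $\DD$-simple $\Rightarrow$ $\DD$ is a simple Lie algebra. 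The ``only if'' part is obtained by producing, for singular $X$, an explicit proper nonzero Lie ideal.

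For (a): let $J\subsetneq R$ be a nonzero $\Der$-stable ideal; by the Nullstellensatz there is $p\in V(J)$. Since $X$ is smooth, $\mathcal O_{X,p}$ is a regular local ring, $\Omega_{R/k}$ is free of rank $d=\dim X$ near $p$, and (here $\ch k=0$ is crucial) there are $\del_1,\dots,\del_d\in\DD$ which in $\widehat{\mathcal O}_{X,p}\cong k[[t_1,\dots,t_d]]$, for a regular system of parameters $t_1,\dots,t_d$ at $p$, act as $\del/\del t_i$ up to a unit. Let $r$ be the minimal order at $p$ of a nonzero element of $J$ and suppose $r\ge 1$; for $f\in J$ of order $r$, with leading form $f_r$, Euler's relation (using $\ch k=0$) gives $\del f_r/\del t_i\ne 0$ for some $i$, whence $\del_i(f)\in J$ has order $r-1<r$, a contradiction. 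So $r=0$, contradicting $p\in V(J)$; hence $V(J)=\emptyset$, i.e. $J=R$, and $R$ is $\DD$-simple.

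For (b) I would carry out Jordan's argument with $D:=\DD$ (a Lie subalgebra and $R$-submodule of $\Der_k R$). Let $0\ne I\trianglelefteq D$. Using $[\eta,f\delta]=\eta(f)\delta+f[\eta,\delta]$ one checks successively: $M:=RI$ is again a Lie ideal and $R$-submodule; $M(R):=\sum_{\xi\in M}\xi(R)$ is an ideal of $R$ (since $s\,\xi(r)=(s\xi)(r)$), is $\Der$-stable (since $\eta(\xi(r))=[\eta,\xi](r)+\xi(\eta(r))$), and is nonzero (a nonzero derivation does not kill $R$); hence $M(R)=R$ by $\DD$-simplicity, and writing $1=\sum_j\xi_j(r_j)$ with $\xi_j\in M$, the identity $\xi_j(r_j)\eta=[\xi_j,r_j\eta]-r_j[\xi_j,\eta]\in M$, valid for every $\eta\in D$, forces $M=D$, i.e. $RI=D$; the same reasoning applied to $D$ gives also $D(R)=R$. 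It remains to upgrade $RI=D$ to $I=D$. For this set $J_I:=\{f\in R:fD\subseteq I\}$: it is an ideal (since $(gf)\delta=f(g\delta)\in fD\subseteq I$) and $\Der$-stable (since $\eta(f)\delta=[\eta,f\delta]-f[\eta,\delta]\in I$ whenever $fD\subseteq I$), so $J_I\in\{0,R\}$, and $J_I=R$ gives $1\in J_I$, hence $I=D$. The \textbf{main obstacle} is to exclude $J_I=0$: this is exactly where Jordan's hypothesis enters, and the relevant case here, $D(R)=R$, is available. One works with the \emph{polarized} bracket identities that follow from $[\delta,f\delta']=\delta(f)\delta'+f[\delta,\delta']$ for $\delta,\delta'\in I$ — for instance $\delta(f)\delta\in I$ and $\delta_1(f)\delta_2+\delta_2(f)\delta_1\in I$ — which are genuine new constraints rather than formal consequences of $I$ being a Lie ideal (naive $R$-scaling of $I$ only lands in $RI=D$), and combines them with $D(R)=R$ to exhibit a nonzero element of $J_I$. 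I expect this bracket bookkeeping to be the most delicate part of the write-up. Finally $\DD$ is nonabelian — already its localization at a smooth point where $\Omega_{R/k}$ is free contains $\del_1$ and $g_1\del_1$ with $[\del_1,g_1\del_1]=\del_1\ne 0$ — so $\DD$ is simple.

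For the ``only if'' part, suppose $X$ is singular, i.e. $\Omega_{R/k}$ is not locally free of rank $d=\dim X$; equivalently the finitely generated torsion $R$-module $N:=\bigwedge^{d+1}_R\Omega_{R/k}$ is nonzero. Put $J:=\mathrm{ann}_R(N)$. The Lie derivative endows $\Omega_{R/k}$, and hence $N$, with a connection $\nabla_\delta$ along $\Der_k R$, and for $f\in J$, $m\in N$ we get $0=\nabla_\delta(fm)=\delta(f)m$, so $\delta(f)\in J$: thus $J$ is a differential ideal. It is nonzero (a finitely generated torsion module over the domain $R$ is annihilated by a nonzero element) and proper (since $N\ne 0$). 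Then $J\,\DD$ is a proper nonzero Lie ideal of $\DD$: it is a Lie ideal because $[\eta,f\delta]=\eta(f)\delta+f[\eta,\delta]\in J\DD$ once $\eta(f)\in J$; it is nonzero because $J\ne 0$ and $\DD$ is a nonzero torsion-free module; and $J\DD=\DD$ is impossible, for Nakayama applied to the finitely generated module $\DD$ would then yield a nonzero $f\in R$ annihilating $\DD=\Hom_R(\Omega_{R/k},R)$, which is torsion-free and nonzero. Hence $\DD$ is not simple, and the equivalence follows.
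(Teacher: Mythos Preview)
Your overall architecture is sound, and your converse is correct (see below), but the place you yourself flag as ``the main obstacle'' --- showing $J_I=\{f\in R: f\DD\subseteq I\}\ne 0$ --- is a genuine gap, not mere bookkeeping. The identities you list, $\delta(f)\delta\in I$ and $\delta_1(f)\delta_2+\delta_2(f)\delta_1\in I$, only control $f\tau$ for $\tau$ built from elements of $I$; they do not by themselves force $f\tau\in I$ for an \emph{arbitrary} $\tau\in\DD$. The paper supplies precisely the missing step (Lemmas~\ref{self} and~\ref{grab}): from $\mu\in I$ one first proves $p\,\mu(\mu(g))\,\mu\in I$ for \emph{all} $p\in R$ (this is the nontrivial input, obtained by combining $[\mu,f\mu]=\mu(f)\mu$ with $[\mu,fh\mu]-[f\mu,h\mu]=2h\mu(f)\mu$), and then the polarization $[\nu,f\tau]-[f\nu,\tau]=\tau(f)\nu+\nu(f)\tau$ with $\nu=p\,\mu(\mu(g))\mu$ yields $\bigl(p\,\mu(f)\mu(\mu(g))\bigr)\tau\in I$ for every $\tau\in\DD$. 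Since $R$ is a domain and $\ch k=0$, one can choose $f,g$ with $\mu(f)\mu(\mu(g))\ne 0$ (Lemma~\ref{nonzero}), giving $J_I\ne 0$. Without this identity (or an equivalent), your step~(b) does not close.

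The paper also organizes the forward direction slightly differently from your (a)$+$(b) split: it does not first prove that $R$ is $\DD$-simple and then invoke Jordan's criterion, but works directly with the Lie ideal $J\lhd\DD$. Your local step (a) is run on $J$ itself (Proposition~\ref{local}): at each point $P$ one finds, by iterated brackets with the global vector fields $\tau_j$ of \eqref{base}, some $\mu\in J$ with $\mu(f)(P)\ne 0$ and $\mu(\mu(g))(P)\ne 0$; then $\mu(f)\mu(\mu(g))\in J_I$ does not vanish at $P$, and varying $P$ the Nullstellensatz gives $1\in J_I$. The local-to-global skeleton is the same as yours, but the bracket lemma is interleaved pointwise rather than deferred to an abstract final step.

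For the converse your argument via $J=\mathrm{ann}_R\bigl(\bigwedge^{d+1}\Omega_{R/k}\bigr)$, Lie-derivative stability of $J$, and Nakayama on the finitely generated torsion-free module $\DD$ is a correct, coordinate-free alternative to the paper's. The paper instead shows by an explicit column-wise determinant computation that the ideal $\Ising$ of maximal Jacobian minors is $\DD$-stable (Proposition~\ref{inv}) and exhibits the proper nonzero Lie ideals $J_i=\{\mu\in\DD:\mu(A)\subset\Ising^i\}$ (Theorem~\ref{thm-sing}). Your version is more conceptual; the paper's is more elementary and avoids invoking the Lie derivative on exterior powers.
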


\medskip

 The method of the proof is based on a local-global
principle. First we establish local properties of ideals in the Lie algebra of vector fields.
In particular, we show that every non-zero ideal is locally ample, i.e., vector fields in the ideal, specialized at any non-singular point, span the whole tangent space at that point.
 Next we apply the  developed technique to construct subspaces 
in the ideal, closed under the left action of the algebra of polynomial functions. Finally, we apply Hilbert's Nullstellensatz to globalize the local information and prove that every non-zero ideal must coincide with the whole Lie algebra of vector fields.
 The proof given in the paper is self-contained and does not depend on the results of \cite{Jo} and \cite{S}.

The structure of the paper is the following. In Section 2 we obtain auxiliary results for the Lie algebras of rational vector fields. In Section 3 the main result Theorem \ref{thm-introd} is proved. In Section 4 we consider the structure  of the algebra of polynomial functions $A$ on an irreducible smooth affine variety as a module over the Lie algebra of vector fields. Theorem \ref{thm-functions} shows that $A$ has length $2$. We consider example of the algebra of polynomial functions on an $(N-1)$-dimensional sphere and discuss its structure as a module over $sl_N$ (Proposition \ref{prop-sphere}). In Section 5 the Lie algebra of vector fields on hyperelliptic curves is considered. It is shown that  the Lie algebra of polynomial vector fields on a non-singular hyperelliptic curve has no nontrivial semisimple or nilpotent elements (Theorem \ref{thm-hyper}). Finally, in Section 6 we consider the Lie algebra of polynomial vector fields  $\mathcal D_G$  on a linear algebraic group $G$. 

\medskip

\noindent{ \bf Acknowledgements}. Y.B. is supported in part by NSERC grant and by FAPESP grant (2015/05859-5). He gratefully acknowledges the hospitality and excellent working conditions at
the University of S\~ao Paulo where this work was done. V.F. is supported in part by CNPq grant (301320/2013-6) and by FAPESP 
grant (2014/09310-5).

\medskip

\section{Rational vector fields.}
In this section we are going to show that the Lie algebra of vector fields on an affine variety $X$ with coefficients in rational functions, is simple. We shall prove this result in the following more general setting:
\begin{thm}
\label{field}
Let $k \subset F$ be an extension of fields, $\ch k \neq 2$ and $\Der_k (F)\neq 0$. Let $\LL$ be 
a non-zero Lie subalgebra in $\Der_k (F)$, which is also a left $F$-submodule in $\Der_k (F)$.
Then $\LL$ is a simple Lie algebra over $k$. In particular, $\Der_k (F)$ is a simple Lie algebra.
\end{thm}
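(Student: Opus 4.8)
The argument will proceed in two phases: first I would show that every nonzero ideal $I$ of $\LL$ is automatically a left $F$-submodule of $\LL$, and then deduce $I=\LL$ by a short bracket computation. The only tool needed is the identity in $\Der_k(F)$,
\[
  [fD,gE]=fg[D,E]+fD(g)\,E-gE(f)\,D\qquad(D,E\in\Der_k(F),\ f,g\in F),
\]
and its special cases $[gD,D]=-D(g)\,D$ and $[gE,D]=g[E,D]-D(g)\,E$.

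For the first phase, fix $0\neq D\in I$ and put $J=\{f\in F:fD\in I\}$, a $k$-subspace of $F$ with $1\in J$. From $[gD,D]=-D(g)\,D\in I$ one gets $D(F)\subseteq J$; and since $pD\in I$ whenever $p\in J$, the bracket $[hD,pD]=(hD(p)-pD(h))\,D\in I$ shows $hD(p)-pD(h)\in J$ for all $p\in J$, $h\in F$. Specializing $h=pm$ turns this into $p^{2}D(m)\in J$ for all $p\in J$, $m\in F$. This is where $\ch k\neq2$ enters: polarizing $p\mapsto p+q$ and dividing by $2$ gives $pq\,D(m)\in J$ for $p,q\in J$, so with $q=1$ we get $p\,D(m)\in J$ for all $p\in J$, $m\in F$. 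Iterating, $J$ contains the $k$-subalgebra $R:=k[D(F)]$ generated by the image of $D$.

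The step I expect to be the crux is to show $R=F$. Here $\ch k\neq2$ is used again: it forces $D^{2}\neq0$, because if $D(a)\neq0$ then $D^{2}(a^{2})=2D(a)^{2}+2aD^{2}(a)$, so $D^{2}=0$ would give the contradiction $2D(a)^{2}=0$. Choose $h_{0}$ with $D^{2}(h_{0})\neq0$ and set $g_{0}=D(h_{0})\in D(F)\subseteq R$ and $u=D(g_{0})\in F^{\times}$. For any $a\in F$, Leibniz' rule $D(ag_{0})=D(a)g_{0}+au$ gives $au=D(ag_{0})-D(a)g_{0}$, which lies in $R$ since $D(ag_{0}),D(a)\in D(F)\subseteq R$ and $g_{0}\in R$. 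Thus $Fu\subseteq R$, and as $u$ is invertible in the field $F$ this forces $R=F$. Hence $J=F$, i.e. $FD\subseteq I$; since $0\neq D\in I$ was arbitrary, $I$ is a left $F$-submodule of $\LL$.

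For the second phase, take $0\neq D\in I$, pick $g\in F$ with $D(g)\neq0$, and let $E\in\LL$ be arbitrary. Then $[gE,D]=g[E,D]-D(g)\,E\in I$; since $[E,D]\in I$ and $I$ is a left $F$-submodule, $g[E,D]\in I$, whence $D(g)\,E\in I$ and therefore $E=D(g)^{-1}(D(g)E)\in I$. So $I=\LL$, proving $\LL$ simple; applying this to $\LL=\Der_k(F)$ itself yields the final assertion. I expect the only real obstacle to be the identity $R=F$ together with the reduction to the $F$-submodule property; everything before that point is formal use of the bracket identity, and everything after it is immediate.
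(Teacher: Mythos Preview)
Your proof is correct. Both arguments rest on the same two pillars: the bracket identity for $F$-multiples of derivations, and the observation (your $D^2\neq 0$, the paper's Lemma~\ref{nonzero}) that $\ch k\neq 2$ forces some second derivative $\mu(\mu(g))$ to be nonzero. The organization, however, differs.

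You split the argument into two clean phases: first show that any nonzero ideal $I$ is automatically an $F$-submodule, then absorb arbitrary $E\in\LL$ via $[gE,D]$. Your route to the first phase passes through the subring $R=k[D(F)]$ and the Leibniz-rule computation $au=D(ag_0)-D(a)g_0\in R$, which shows $Fu\subseteq R$ and hence $R=F$. The paper instead develops a chain of lemmas (Lemmas \ref{switch}--\ref{grab}) valid over an arbitrary commutative ring $S$ without nilpotents: from $\mu\in J$ it deduces directly that $f\mu(\mu(g))\mu\in J$ for all $f,g$, and then that $q\tau\in J$ for every $\tau\in L$ and every $q$ in the principal ideal generated by $\mu(f)\mu(\mu(g))$. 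Over a field this ideal is all of $F$, giving $J=\LL$ in one stroke without isolating the $F$-submodule property as a separate step.

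What each approach buys: your two-phase decomposition is conceptually transparent and exploits the field structure from the start (inverting $u$, then $D(g)$). The paper's ring-level lemmas are less streamlined for the field case alone, but they are deliberately set up for reuse in Section~3, where $A$ is the coordinate ring of a variety rather than a field and one cannot simply invert; there the same Lemma~\ref{grab} combines with a local argument and the Nullstellensatz.
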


Let us start in even more general setting. Let $K \subset S$ be an extension of 
commutative rings. Assume that $2$ is invertible in $K$.
The ring of derivations $\Der_K (S)$ is the space of $K$-linear maps $\eta: \, S \rightarrow S$, satisfying the Leibniz rule: $\eta (fg) = \eta(f) g + f \eta(g)$, for all $f, g \in S$. This space is closed under the Lie bracket and has a natural structure of a left $S$-module. Note that the Lie bracket is not $S$-linear, instead, these two structures are related in the following way:
\begin{equation}
\label{bracket}
[f \eta, g \mu] = fg [\eta, \mu] + f \eta(g) \mu - g \mu(f) \eta,
\hskip 0.6cm
\text{for \ } f, g \in S, \ \eta, \mu \in  \Der_K (S).
\end{equation}
The proof of Theorem \ref{field} will be based on the following four Lemmas.

\begin{lem}
\label{switch}
Let $f, h \in S$, $\eta, \mu \in \Der_K (S)$. Then
\begin{equation*}
(a) \ [\mu, f\eta] - [f\mu, \eta] = \eta(f) \mu + \mu(f) \eta, \hskip 6.2cm
\end{equation*}
\begin{equation*}
(b) \ [\mu, fh \mu] - [f \mu, h \mu] = 2 h \mu(f) \mu.  \hskip 6.5cm
\end{equation*}
\end{lem}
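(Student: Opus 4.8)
The plan is to obtain both identities by a direct expansion of the left-hand sides using the commutation rule \eqref{bracket}, together with two elementary observations: $\eta(1)=0$ for every $\eta\in\Der_K(S)$ (immediate from $\eta(1)=\eta(1\cdot 1)=2\eta(1)$), and $[\mu,\mu]=0$. No deeper input is required, so the only real work is careful bookkeeping of which factor plays the role of $f\eta$ and which the role of $g\mu$ in \eqref{bracket}.

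For part $(a)$, I would write $\mu=1\cdot\mu$ and $\eta=1\cdot\eta$ and apply \eqref{bracket} to each of the two brackets separately. The first expands as $[\mu,f\eta]=f[\mu,\eta]+\mu(f)\eta-f\eta(1)\mu=f[\mu,\eta]+\mu(f)\eta$, and the second as $[f\mu,\eta]=f[\mu,\eta]+f\mu(1)\eta-\eta(f)\mu=f[\mu,\eta]-\eta(f)\mu$, where in each case the term containing $\eta(1)$ or $\mu(1)$ drops out. Subtracting, the common term $f[\mu,\eta]$ cancels and what remains is $\mu(f)\eta+\eta(f)\mu$, which is exactly the right-hand side of $(a)$.

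For part $(b)$, I would specialize to $\eta=\mu$ and run the same computation, now also using $[\mu,\mu]=0$ and the Leibniz rule $\mu(fh)=h\mu(f)+f\mu(h)$. This gives $[\mu,fh\,\mu]=\mu(fh)\,\mu=\bigl(h\mu(f)+f\mu(h)\bigr)\mu$ and $[f\mu,h\mu]=\bigl(f\mu(h)-h\mu(f)\bigr)\mu$; subtracting, the terms $f\mu(h)\mu$ cancel and one is left with $2h\mu(f)\mu$, as claimed. The whole argument is routine; there is no genuine obstacle, the only thing to watch being the vanishing of $\eta(1)$, which is what kills the would-be extra terms. (Note that invertibility of $2$ plays no role in the lemma itself; it is needed only later, when part $(b)$ is used to isolate $h\mu(f)\mu$.)
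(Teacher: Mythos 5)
Your proof is correct and is exactly the argument the paper intends: the paper's proof of this lemma is simply ``This follows immediately from (\ref{bracket})'', and your careful expansion of each bracket via (\ref{bracket}) (using $\eta(1)=\mu(1)=0$ and $[\mu,\mu]=0$) is the intended routine verification. Your closing remark that invertibility of $2$ is not needed for the identity itself, only for its later use, is also accurate.
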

\begin{proof}
This follows immediately from (\ref{bracket}).
\end{proof}

Let $L$ be a Lie subring in $\Der_K (S)$, which is also a left $S$-submodule 
 in $\Der_K (S)$.

\begin{lem}
\label{self}
 Let $J$ be an ideal in $L$, and let $\mu \in J$. Then for all
$f, g \in S$,

(a) $\mu(f) \mu \in J$,

(b) $\mu(f) \mu(g) \mu \in J$,

(c) $f \mu ( \mu(g) ) \mu \in J$.
\end{lem}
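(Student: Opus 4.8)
The plan is to derive all three statements from the bracket identity~(\ref{bracket}) and Lemma~\ref{switch}, using only that $J$ is an ideal of $L$ and that $L$ is a left $S$-submodule of $\Der_K(S)$. Since $\mu \in J$ and $f\mu, fg\mu \in L$ for any $f,g\in S$, every bracket of $\mu$ with such an element lies in $J$; the task is to combine a few of these brackets so that the unwanted terms cancel and one of the listed elements survives.

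For part (a), I would apply Lemma~\ref{switch}(b) with $h$ replaced by the constant $1$ (or, more carefully, run the computation $[\mu, f\mu] - [f\mu,\mu]$ directly from~(\ref{bracket}), noting $[f\mu,\mu] = -[\mu,f\mu]$): expanding~(\ref{bracket}) gives $[\mu, f\mu] = f[\mu,\mu] + \mu(f)\mu = \mu(f)\mu$, so $\mu(f)\mu = [\mu, f\mu] \in J$ because $f\mu \in L$ and $J$ is an ideal. Part (b) then follows by bootstrapping on (a): apply (a) to the element $\mu(f)\mu \in J$ in place of $\mu$, i.e. compute $[\mu(f)\mu,\ g\,\mu(f)\mu]$. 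Using~(\ref{bracket}) with $\eta=\mu=\mu(f)\mu$ this bracket equals $\mu(f)^2[\mu,\mu] + \mu(f)\,(\mu(f)\mu)(g)\,\mu(f)\mu$; since $(\mu(f)\mu)(g) = \mu(f)\mu(g)$, the surviving term is $\mu(f)^3\mu(g)\,\mu \in J$. To pass from $\mu(f)^3\mu(g)\mu$ to $\mu(f)\mu(g)\mu$ I would instead use the polarization trick: first note $\mu(f)\mu \in J$ for all $f$, then bracket $\mu(f)\mu$ with $g\mu$ for a fresh element, $[\mu(f)\mu,\ g\mu] = \mu(f)g[\mu,\mu] + \mu(f)\mu(g)\mu - g\mu(f)^2\mu = \mu(f)\mu(g)\mu - g\,\mu(f)^2\mu$; since $\mu(f)^2\mu\in J$ by (a) (take $f^2$? no—take the element $\mu(f)\mu$ and apply (a) again with scalar $\mu(f)$), the term $g\mu(f)^2\mu$ lies in $J$, hence $\mu(f)\mu(g)\mu \in J$. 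This gives (b).

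For part (c), the target is $f\mu(\mu(g))\mu$. I would start from $\mu(g)\mu \in J$ (part (a)) and bracket it with the element $f\mu \in L$. From~(\ref{bracket}), $[\mu(g)\mu,\ f\mu] = \mu(g)f[\mu,\mu] + \mu(g)\mu(f)\mu - f\mu(\mu(g))\,\mu = \mu(g)\mu(f)\mu - f\mu(\mu(g))\mu$. The first term $\mu(g)\mu(f)\mu$ lies in $J$ by part (b), and the left-hand side lies in $J$ since $J$ is an ideal; therefore $f\mu(\mu(g))\mu \in J$, which is exactly (c).

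I do not anticipate a genuine obstacle here — the argument is a short sequence of applications of~(\ref{bracket}) together with the ideal property — but the one point requiring care is the bootstrapping in part (b): one must be disciplined about \emph{which} element of $J$ is being fed back into identity~(\ref{bracket}) and against which element of $L$ it is bracketed, so that the quadratic terms $\mu(f)^2\mu$ (already known to be in $J$ via (a)) are correctly recognized and subtracted off. The invertibility of $2$ is not needed for this lemma; it will only be used later when Lemma~\ref{switch}(b) is invoked with its factor of $2$.
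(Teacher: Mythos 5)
Your part (a) is exactly the paper's argument, and your part (c) is a correct mild variant: the paper expands $f \mu(\mu(g)) = \mu(f\mu(g)) - \mu(f)\mu(g)$ and quotes (a) and (b), whereas you obtain the same cancellation from the single bracket $[\mu(g)\mu, f\mu]$; either works once (b) is available.

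Part (b), however, has a genuine gap. First, the bracket is miscomputed: by (\ref{bracket}), $[\mu(f)\mu,\, g\mu] = \mu(f)\mu(g)\mu - g\,\mu(\mu(f))\,\mu$, where the last term involves the derivation $\mu$ applied to the function $\mu(f)$, i.e.\ $\mu(\mu(f))$, not the square $\mu(f)^2$. More seriously, even if the unwanted term were $g\,\mu(f)^2\mu$, you could not discard it the way you propose: knowing that some element $\nu$ lies in $J$ does not put $g\nu$ in $J$, because $J$ is only a Lie ideal of $L$, not an $S$-submodule of it --- closure of (parts of) $J$ under multiplication by functions is precisely what Lemma \ref{grab} establishes later, and that lemma depends on the present one. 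The actual unwanted term $g\,\mu(\mu(f))\,\mu$ is essentially part (c) with $f$ and $g$ interchanged, so subtracting it off here would be circular. The paper's route is Lemma \ref{switch}(b) with $h = \mu(g)$: the combination $[\mu,\, f\mu(g)\mu] - [f\mu,\, \mu(g)\mu]$ lies in $J$ (the first bracket pairs $\mu \in J$ with an element of $L$, the second pairs $f\mu \in L$ with $\mu(g)\mu \in J$ by part (a)) and equals $2\mu(g)\mu(f)\mu$; dividing by $2$ gives (b). This also contradicts your closing remark: the invertibility of $2$ \emph{is} used in this lemma --- any such two-bracket combination that cancels the second-order terms produces the factor $2$ --- consistent with the standing hypothesis that $2$ is invertible in $K$.
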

\begin{proof}
To establish part (a), consider the commutator $[\mu, f \mu] = \mu(f) \mu \in J$.
Applying  Lemma  \ref{switch} (b) with $h = \mu(g)$, we establish part (b).
Now (c) follows from (a) and (b) since
\begin{equation*}
f \mu ( \mu (g)) = \mu( f \mu(g)) - \mu(f) \mu(g).
\end{equation*} 
\end{proof}

\begin{lem}
\label{grab}
Let $f, g \in S$ and let $\mu\in L$. Let $I_{f,g,\mu}$ be the principal ideal in $S$ generated by $\mu(f) \mu( \mu(g))$, and let $J$ be an ideal in $L$ containing $\mu$.
Then for every $q \in I_{f,g,\mu}$ and every $\tau \in L$, $q\tau \in J$. 
\end{lem}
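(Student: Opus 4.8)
The plan is to reduce the assertion to the single containment $\mu(f)\,\mu(\mu(g))\,\tau \in J$ for every $\tau \in L$, and then to extract that containment by playing two instances of the commutator formula \eqref{bracket} against each other so that the terms not manifestly in $J$ cancel.

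For the reduction, I would observe that the set $\mathcal{I} := \{\, q \in S : q\tau \in J \text{ for all } \tau \in L \,\}$ is an ideal of the ring $S$. It is obviously an additive subgroup, and if $q \in \mathcal{I}$, $s \in S$, $\tau \in L$, then $(sq)\tau = q(s\tau) \in J$ because $s\tau \in L$ (here $L$ being a left $S$-submodule of $\Der_K(S)$ is used). Hence it suffices to show that the generator $\mu(f)\,\mu(\mu(g))$ of $I_{f,g,\mu}$ lies in $\mathcal{I}$.

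To prove $\mu(f)\,\mu(\mu(g))\,\tau \in J$, the input is Lemma \ref{self}(c), which gives $p\,\mu(\mu(g))\,\mu \in J$ for every $p \in S$; in particular $\mu(\mu(g))\mu \in J$ and $f\mu(\mu(g))\mu \in J$. Writing $u = \mu(\mu(g))$ and expanding with \eqref{bracket},
\begin{align*}
[\,u\mu,\ f\tau\,] &= uf[\mu,\tau] + u\,\mu(f)\,\tau - f\,\tau(u)\,\mu, \\
[\,fu\mu,\ \tau\,] &= fu[\mu,\tau] - \tau(f)\,u\mu - f\,\tau(u)\,\mu ,
\end{align*}
and both left-hand sides lie in $J$, since $u\mu, fu\mu \in J$ while $f\tau, \tau \in L$. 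Solving the first identity for $u\,\mu(f)\,\tau = \mu(f)\,\mu(\mu(g))\,\tau$ and substituting $uf[\mu,\tau]$ from the second, the term $f\,\tau(u)\,\mu$ cancels and one is left with
\[
\mu(f)\,\mu(\mu(g))\,\tau \;=\; [\,u\mu,\ f\tau\,] \;-\; [\,fu\mu,\ \tau\,] \;-\; \tau(f)\,\mu(\mu(g))\,\mu .
\]
Every term on the right is in $J$: the two brackets as noted, and $\tau(f)\,\mu(\mu(g))\,\mu \in J$ by Lemma \ref{self}(c) applied with first argument $\tau(f) \in S$. This gives $\mu(f)\,\mu(\mu(g)) \in \mathcal{I}$, and hence the lemma.

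The only genuine point is the cancellation: individual terms such as $f\,\mu(\mu(g))[\mu,\tau]$ or $f\,\tau(\mu(\mu(g)))\,\mu$ are merely $S$-multiples of elements of $J$ and need not lie in $J$, because $J$ is a Lie ideal, not an $S$-submodule; this is precisely why the statement carries the second-order factor $\mu(\mu(g))$ alongside $\mu(f)$, so that Lemma \ref{self}(c) furnishes $p\,\mu(\mu(g))\,\mu \in J$ for all $p$ and the awkward pieces can be organized into honest commutators of elements of $J$ with elements of $L$. Everything else is routine bookkeeping with \eqref{bracket}.
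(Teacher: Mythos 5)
Your proof is correct and is essentially the paper's argument: the identity you derive by subtracting the two bracket expansions is exactly Lemma \ref{switch}(a) applied to $u\mu$ and $\tau$, and the terms are absorbed into $J$ via Lemma \ref{self}(c) just as in the paper. The only cosmetic difference is that you first reduce to the generator by noting that $\{q \in S : q\tau \in J \text{ for all } \tau \in L\}$ is an ideal of $S$, whereas the paper runs the same computation directly with a general element $p\,\mu(f)\,\mu(\mu(g))$.
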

\begin{proof}
Write $q = p \mu(f) \mu( \mu(g) )$. Then by Lemma \ref{switch} (a),
\begin{equation*}
[p \mu (\mu (g)) \mu, f \tau] - [f p \mu (\mu (g)) \mu, \tau] 
- \tau(f) p \mu ( \mu (g) ) \mu = q \tau .
\end{equation*} 
By Lemma \ref{self} (c), the left hand side belongs to $J$.
\end{proof}

\begin{lem}
\label{nonzero}
Suppose $S$ does not have non-zero nilpotent elements. Then
for every non-zero $\mu \in \Der_K (S)$ there exists $g \in S$ such that 
$\mu ( \mu (g) ) \neq 0$.
\end{lem}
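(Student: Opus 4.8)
The plan is to argue by contraposition: I would show that if $\mu(\mu(g)) = 0$ for every $g \in S$, then $\mu = 0$. So assume $\mu \circ \mu$ is identically zero on $S$. The first step is to use the Leibniz rule to expand $\mu(\mu(fg))$ for arbitrary $f, g \in S$ and see what vanishing of $\mu^2$ forces. Since $\mu(fg) = \mu(f) g + f \mu(g)$, applying $\mu$ once more gives
\begin{equation*}
0 = \mu(\mu(fg)) = \mu(\mu(f)) g + \mu(f) \mu(g) + \mu(f) \mu(g) + f \mu(\mu(g)) = 2 \mu(f) \mu(g),
\end{equation*}
using $\mu(\mu(f)) = \mu(\mu(g)) = 0$. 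Since $2$ is invertible in $K$ (hence acts invertibly on $S$), this yields $\mu(f) \mu(g) = 0$ for all $f, g \in S$.

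Now take $g = f$: we get $\mu(f)^2 = 0$ for every $f \in S$. By hypothesis $S$ has no non-zero nilpotent elements, so $\mu(f) = 0$ for all $f$, i.e. $\mu = 0$. This contradicts $\mu \neq 0$, completing the contrapositive and hence the lemma.

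I do not anticipate a serious obstacle here; the only point requiring a little care is making sure the cancellation of the factor $2$ is legitimate, which is exactly where the standing assumption that $2$ is invertible in $K$ (and the fact that $\mu$ is $K$-linear and $S$ is a $K$-algebra, so multiplication by $2$ is injective on $S$) gets used. The no-nilpotents hypothesis on $S$ is then used precisely once, to pass from $\mu(f)^2 = 0$ to $\mu(f) = 0$. If one wanted to be economical, the whole argument is really the single computation $\mu^2(fg) = 2\mu(f)\mu(g)$ followed by setting $f = g$.
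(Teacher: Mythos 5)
Your proof is correct and rests on the same computation as the paper's: the Leibniz identity $\mu(\mu(fg)) = \mu(\mu(f))\,g + 2\mu(f)\mu(g) + f\,\mu(\mu(g))$, combined with the invertibility of $2$ and the absence of nilpotents in $S$. The paper argues directly (choosing $g=f$ if $\mu(\mu(f))\neq 0$ and $g=f^2$ otherwise) while you argue by contraposition, but the content is identical.
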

\begin{proof}
Since $\mu \neq 0$, there exists $f \in S$ such that $\mu(f) \neq 0$. 
If $\mu (\mu (f) ) \neq 0$, we can take $g = f$. If $\mu (\mu (f) ) = 0$, 
we take $g = f^2$ and get
\begin{equation*}
\mu (\mu (f^2) ) = 2 \mu ( f \mu (f) ) = 2 \mu(f) \mu(f) + 2 f \mu( \mu (f) )
= 2 \mu (f)^2 \neq 0.
\end{equation*}
\end{proof}

Now we can prove Theorem \ref{field}. We need to show that the ideal $J \lhd \LL$ 
generated by an arbitrary non-zero element $\mu$ is equal to $\LL$.

Using Lemma \ref{nonzero}, choose $f, g \in F$ such that 
$\mu(f) \neq 0$, $\mu (\mu (g) ) \neq 0$. The ideal $I_{f,g,\mu}$ has a non-zero 
generator, and since $F$ is a field, $I_{f,g,\mu} = F$. By Lemma \ref{grab}, 
every $\tau \in \LL$ belongs to $J$.

\section{Polynomial vector fields.}

In this section we prove simplicity of the Lie algebra of polynomial vector fields on 
a smooth irreducible affine variety.  First we establish local properties of ideals in the Lie algebra of vector fields.
In particular, we show that every non-zero ideal is locally ample, i.e., vector fields in the ideal, specialized at any non-singular point, span the whole tangent space at that point.
 The results of the previous section are then applied  to construct subspaces 
in the ideal which are closed under the left action of the algebra of polynomial functions. 
Then application of the Hilbert's Nullstellensatz  globalizes the local information and proves the simplicity.

Let $k$ be an algebraically closed field of characteristic $0$, and let $X \subset \A_k^n$ be an irreducible affine variety.  Let $I$ be the ideal of functions in $k[x_1,\ldots, x_n]$
that vanish on $X$, and let $A = k[x_1,\ldots, x_n]/I$ be the algebra of polynomial functions on $X$. Since $X$ is irreducible, the ideal $I$ is prime, and $A$ has no divisors of zero.

Let us present an explicit vector field realization of the Lie algebra $\Der_k (A)$.
We start with the Lie algebra $W_n$ of vector fields on the affine space $\A_k^n$:
\begin{equation*}
W_n = \mathop\oplus\limits_{j=1}^n k[x_1,\ldots, x_n] \frac{\del}{\del x_j}.
\end{equation*} 
\begin{prp}
\label{threedef}
There exist natural isomorphisms between the following Lie algebras:

(1) $\Der_k (A)$,

(2) The subquotient in $W_n$:
\begin{equation*}
\DD = \left\{ \mu \in W_n \, \big| \, \mu(I) \subset I \right\}\big/
\left\{ \mu \in W_n \, \big| \, \mu( k[x_1,\ldots, x_n] ) \subset I \right\},
\end{equation*}

(3) $\Hom_A ( \Omega_A, A)$, where $\Omega_A$ is the space of K\"ahler differentials of $A$.
\end{prp}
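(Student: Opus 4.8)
The plan is to prove the two isomorphisms $(1)\cong(2)$ and $(1)\cong(3)$ separately, using $\Der_k(A)$ as the common hub.

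\emph{Realization $(2)$.} First I would record that $W_n^I := \{\mu \in W_n \mid \mu(I)\subset I\}$ is a Lie subalgebra of $W_n$ (if $\mu,\nu$ both preserve $I$, then $[\mu,\nu](I) = \mu(\nu(I)) - \nu(\mu(I)) \subset I$), that $W_n^0 := \{\mu \in W_n \mid \mu(k[x_1,\ldots,x_n])\subset I\}$ is a Lie ideal of $W_n^I$ (for $\mu \in W_n^0$, $\nu \in W_n^I$ one has $[\nu,\mu](k[x_1,\ldots,x_n]) \subset \nu(I) + \mu(k[x_1,\ldots,x_n]) \subset I$) and a left $k[x_1,\ldots,x_n]$-submodule; hence the subquotient $\DD = W_n^I / W_n^0$ inherits a well-defined Lie algebra structure. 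Then I would define the map $\DD \to \Der_k(A)$ sending the class of $\mu$ to the operator $\bar\mu : f + I \mapsto \mu(f) + I$ on $A$; this is well defined exactly because $\mu$ preserves $I$, it is a $k$-linear derivation of $A$ by the Leibniz rule, and it is a Lie algebra homomorphism because $\overline{[\mu,\nu]} = \bar\mu\,\bar\nu - \bar\nu\,\bar\mu$ as operators on $A$. Its kernel is precisely $W_n^0$, so the induced map $\DD \to \Der_k(A)$ is injective.

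\emph{Surjectivity of the lift.} Given $\delta \in \Der_k(A)$, choose for each $i$ a polynomial $g_i \in k[x_1,\ldots,x_n]$ representing $\delta(x_i + I) \in A$, and set $\mu = \sum_{i=1}^n g_i \dfrac{\del}{\del x_i} \in W_n$. By the Leibniz rule and $k$-linearity, the two operators $f \mapsto \mu(f)+I$ and $f \mapsto \delta(f+I)$ on $k[x_1,\ldots,x_n]$ agree on the generators $x_1,\ldots,x_n$ and on $k$, hence coincide; in particular $\mu(I)\subset I$, so $\mu \in W_n^I$ and $\bar\mu = \delta$. This gives $(1)\cong(2)$.

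\emph{Realization $(3)$.} Here I would invoke the universal property of the module of K\"ahler differentials: $d : A \to \Omega_A$ is the universal $k$-derivation, so for every $A$-module $M$ composition with $d$ is a natural isomorphism between $\Hom_A(\Omega_A, M)$ and the space of $k$-derivations $A \to M$. Taking $M = A$ yields an isomorphism of $A$-modules $\Hom_A(\Omega_A, A) \cong \Der_k(A)$, along which the Lie bracket is transported. As a concrete cross-check with $(2)$: $\Omega_A$ is presented by generators $dx_1,\ldots,dx_n$ subject to the relations $\sum_i \frac{\del f}{\del x_i}\, dx_i = 0$ for $f$ in a generating set of $I$, so $\Hom_A(\Omega_A, A)$ is identified with the tuples $(a_1,\ldots,a_n)\in A^n$ annihilating every $df$, which matches $\DD$ under $\mu = \sum_i a_i \frac{\del}{\del x_i}$.

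I do not expect a genuine obstacle: all three objects are standard incarnations of the tangent sheaf of $X$, and the argument is bookkeeping with the Leibniz rule and the universal property. The one point requiring a little care is that the bracket descends correctly through the subquotient in $(2)$ and that the chosen lift $\mu$ in the surjectivity step actually preserves $I$; both follow from the fact that a derivation is determined by its values on a set of algebra generators.
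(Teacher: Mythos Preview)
Your argument is correct and follows essentially the same route as the paper: for $(1)\cong(2)$ you build the obvious map $\{\mu\in W_n\mid \mu(I)\subset I\}\to \Der_k(A)$, identify its kernel as $W_n^0$, and lift a given derivation by choosing representatives $g_i$ of $\delta(x_i+I)$, exactly as the paper does. For $(1)\cong(3)$ the paper simply cites the universal property of K\"ahler differentials from Eisenbud's textbook rather than writing it out, so your treatment is a bit more explicit but not different in substance.
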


\begin{proof}
We are going to establish the isomorphism between (1) and (2). For the isomorphism between (1) and (3) we refer to \cite{E}, Chapter 16. We are not going to use 
 K\"ahler differentials in this paper, so part (3) is included here only for completeness.

It is clear that every $\mu \in W_n$ satisfying $\mu (I) \subset I$, yields a well-defined 
derivation of $A = k[x_1,\ldots, x_n] / I$. This gives a homomorphism of Lie algebras
\begin{equation*}
\varphi: \ \left\{ \mu \in W_n \, \big| \, \mu(I) \subset I \right\}
\rightarrow \Der_k (A).
\end{equation*}
The subspace $\left\{ \mu \in W_n \, \big| \, \mu( k[x_1,\ldots, x_n] ) \subset I \right\}$
is the kernel of $\varphi$, which gives us an injection
\begin{equation*}
{\overline\varphi}: \ 
\DD \hookrightarrow  \Der_k (A).
\end{equation*}
It remains to show that ${\overline\varphi}$ is surjective. Let $d \in \Der_k (A)$.
Suppose $d(x_j + I) = g_j +I$  for some $g_j \in  k[x_1,\ldots, x_n]$, 
$j=1, \ldots, n$.
Consider the derivation $\eta =\sum\limits_{j=1}^n g_j \frac{\del}{\del x_j} \in W_n$.
Then for $f \in k[x_1,\ldots, x_n]$,
\begin{equation*}
\eta( f(x_1,\ldots, x_n)) + I = d ( f (x_1 + I, \ldots, x_n + I)).
\end{equation*} 
If $f \in I$ then the above equality implies $\eta(f) \in I$. Hence
$\eta \in  \left\{ \mu \in W_n \, \big| \, \mu(I) \subset I \right\}$,
${\overline\varphi} (\eta) = d$, and ${\overline\varphi}$ is an isomorphism of Lie algebras.
\end{proof} 


Consider a set of generators $\{ f_1, \ldots, f_m \}$ of ideal $I$. A vector field
$\eta = \sum\limits_{j=1}^n g_j \frac{\del}{\del x_j}$ belongs to subalgebra
$\left\{ \mu \in W_n \, \big| \, \mu(I) \subset I \right\}$ if and only if for all
$i  = 1, \ldots, m$, 
\break
$\eta(f_i) = \sum\limits_{j=1}^n g_j \frac{\del f_i}{\del x_j} \in I$.

The subspace  $\left\{ \mu \in W_n \, \big| \, \mu( k[x_1,\ldots, x_n] ) \subset I \right\}$
can be simply written as $\mathop\oplus\limits_{j=1}^n I  \frac{\del}{\del x_j}$.
Thus the quotient
\begin{equation*}
\DD = \left\{ \mu \in W_n \, \big| \, \mu(I) \subset I \right\}\big/
\left\{ \mu \in W_n \, \big| \, \mu( k[x_1,\ldots, x_n] ) \subset I \right\}
\end{equation*}
is a left $A$-submodule in a free $A$-module
\begin{equation*}
\mathop\oplus\limits_{j=1}^n k[x_1,\ldots, x_n]  \frac{\del}{\del x_j} \bigg/
\mathop\oplus\limits_{j=1}^n I  \frac{\del}{\del x_j}
\ \cong \  
\mathop\oplus_{j=1}^n A \frac{\del}{\del x_j} .
\end{equation*}
Viewing $\{g_1, \ldots, g_n \}$ as elements of $A$, we obtain the following criterion
for $\eta =  \sum\limits_{j=1}^n g_j \frac{\del}{\del x_j}$ to be a vector field on $X$:
\begin{equation}
\label{system}
\left\{
\begin{matrix}
&\frac{\del f_1}{\del x_1} g_1 &+ & \ldots & +& \frac{\del f_1}{\del x_n} g_n = 0, \cr
&&&\ldots &&\cr
&\frac{\del f_m}{\del x_1} g_1 &+ & \ldots & +& \frac{\del f_m}{\del x_n} g_n = 0. \cr 
\end{matrix}
\right.
\end{equation}
Thus $\DD$ is an $A$-module of solutions in $A^n$ of this system of linear homogeneous equations with coefficients in $A$. One technical difficulty here
is that $\DD$ need not be a free $A$-module (a well-known example of this is $X = \Sp^2$).

The situation simplifies when we solve this system of equations over the field $F$ of
fractions of $A$. The set of solutions of (\ref{system}) is of course a free $F$-module.
Let $r$ be the rank over $F$ of the Jacobian matrix
\begin{equation*}
\Jac = 
\begin{pmatrix}
\frac{\del f_1}{\del x_1}  & \ldots & \frac{\del f_1}{\del x_n}  \cr
&\ldots &\cr
\frac{\del f_m}{\del x_1}  & \ldots & \frac{\del f_m}{\del x_n}  \cr 
\end{pmatrix}.
\end{equation*}
Since the Jacobian matrix is polynomial, it can be 
specialized at any point of $X$. It follows from the Nullstellensatz that
\begin{equation*}
\rank_F \left(\Jac \right) = \max_{P\in X} \, \rank_k \left( \Jac(P) \right) .
\end{equation*}
Then the dimension $s = n - r$ of the solution space over $F$ is equal to the dimension of the variety $X$ (\cite{Sha}, Section II.1.4, Theorem 3). 
Singular points $P$ in $X$ are characterized by the condition (\cite{Sha}, Section II.1.4)
\begin{equation*}
\rank_k \left( \Jac(P) \right) < \rank_F \left(\Jac \right) .
\end{equation*}
For two $r$-element subsets $\alpha \subset [1 \dots n]$ and 
$\beta \subset [1 \dots m]$,
denote by $\detab\in A$ the corresponding $r \times r$ minor in $\Jac$. The locus of singular points in $X$ is given by the zeros in $X$ of the ideal $\Ising \lhd A$ generated 
by all minors $\detab$. 

Assume now that $P = (p_1, \ldots, p_n)$ is a non-singular point of $X$. Without loss
of generality we assume that the principal minor of $\Jac(P)$ (the one formed by the first 
$r$ rows and columns) is non-zero. Then the same minor of $\Jac$ is a non-zero polynomial function $h \in A$, $h(P) \neq 0$.

The maximal ideal $\m_P \lhd A$ of functions on $X$ vanishing at $P$, is generated
by $x_1 - p_1$, \ldots, $x_n - p_n$. Taking the Taylor expansion at $P$ of 
$f_1, \ldots, f_m$ and passing to the quotient modulo $\m_P^2$, we obtain 
linear relations on  $x_1 - p_1$, \ldots, $x_n - p_n$ in $\m_P / \m_P^2$ with matrix
$\Jac(P)$.

Since $\Jac(P)$ has rank $r$ and its principal $r \times r$ minor is non-zero, 
we conclude that
\begin{equation}
\label{param}
t_1 = x_{r+1} - p_{r+1}, \ldots, t_s = x_n - p_n
\end{equation} 
form a basis of $\m_P / \m_P^2$.
Hence these functions may be chosen as local parameters at $P$  (\cite{Sha}, 
Section II.2.1).

Since the principal $r \times r$ minor
of $\Jac$ is a non-zero polynomial $h \in A$, we can write  a basis of the solution
space of (\ref{system}) over $F$ by choosing the first $r$ variables as leading, and 
the last $s$ variables as free:
\begin{equation*}
\left\{ \frac{q_{1j}}{h} \frac{\del}{\del x_1} + \ldots
+ \frac{q_{rj}}{h} \frac{\del}{\del x_r} + \frac{\del}{\del x_{r+j}} 
\, \bigg| \, j = 1, \ldots s \right\} ,
\end{equation*}
where $q_{ij}\in A$. Multiplying by $h$, we obtain vector fields 
\begin{equation}
\label{base}
\tau_j = q_{1j} \frac{\del}{\del x_1} + \ldots
+ q_{rj} \frac{\del}{\del x_r} + h \frac{\del}{\del x_{r+j}} \in \DD.
\end{equation}
Of course these need not generate $\DD$ as an $A$-module.

We will use local parameters to establish local properties of ideals in $\DD$. Recall that
for a non-singular point $P$ with local parameters $t_1, \ldots, t_s \in A$, there exists a 
unique embedding of $A$ 
\begin{equation}
\label{embedA}
\pi: \ A \hookrightarrow R
\end{equation}
into the algebra of formal power series $R = k[[t_1, \ldots, t_s]]$ 
such that $\pi (t_i) = t_i$, $i = 1, \ldots, s$  (\cite{Sha}, Section II.2.2, Theorems 3, 4, 5).

Let $\m_0$ be the ideal in $R$ of power series without a constant term. Consider 
descending filtrations in $A$ and $R$:
\begin{align}
\label{filter}
\begin{split}
&A \supset \m_P \supset \m_P^2 \supset \m_P^3 \supset \ldots \\
&R \supset \m_0 \supset \m_0^2 \supset \m_0^3 \supset \ldots
\end{split}
\end{align}

We define topologies on $A$ and $R$ by taking (\ref{filter}) to be bases of open neighbourhoods of $0$. Since $\mathop\cap\limits_{j = 1}^\infty \m_P^j = (0)$ 
 (\cite{Sha}, Appendix, Section 6, Proposition 4) and 
$\mathop\cap\limits_{j = 1}^\infty \m_0^j = (0)$ , these topologies are separable.

The argument at the beginning of Section II.2.2 in \cite{Sha} shows that the subspace
$k [t_1, \ldots, t_s]$ is dense in $A$. By construction of $\pi$ (loc. cit.), 
$\pi (\m_P^j) \subset \m_0^j$, hence the map $\pi$ is continuous.

Next consider the Lie algebra $\Der_k\, R$ of derivations of $R$. Since for 
$d \in  \Der_k\, R$ we have $d (\m_0^j) \subset \m_0^{j-1}$, we conclude that
every derivation of $R$ is continuous and hence
\begin{equation*}
d \left( \sum_{u \in \Z_+^s} c_u t^u \right) =  \sum_{u \in \Z_+^s} c_u d(t^u) .
\end{equation*}
It follows that every derivation in $\Der_k\, R$ is determined by its values on 
$t_1, \ldots, t_s$ and $\Der_k\, R \cong \HD$, where
\begin{equation*}
\HD = \mathop\oplus\limits_{j=1}^s R \frac{\del}{\del t_j}.
\end{equation*}
The same argument shows that all derivations of $A$ are continuous as well.

Define a filtration on the Lie algebra $\HD$:
\begin{equation*}
\HD = \HD_{-1} \supset \HD_0 \supset \HD_1  \supset \ldots  \ ,
\end{equation*}
where $\HD_{i-1} = \m_0^i \HD$. Then $[\HD_i , \HD_j] \subset \HD_{i+j}$.
The associated graded Lie algebra 
\begin{equation*}
\grD = \HD_{-1} / \HD_0 \oplus \HD_0 / \HD_1 \oplus \HD_1 / \HD_2 \oplus \ldots
\end{equation*}
is isomorphic to $W_s$. We define the projection map $\omega: \, \HD \rightarrow \grD$, where for $\tau \in \HD_j \backslash \HD_{j+1}$ we set $\omega(\tau) = \tau + \HD_{j+1} \in \HD_j / \HD_{j+1}$.

The projection map $\omega$ is not additive, but it has the property that if
$[\omega(\tau), \omega(\eta)] \neq 0$ then 
$\omega([\tau, \eta]) = [\omega(\tau), \omega(\eta)]$.

\begin{prp}
Let $t_1, \ldots, t_s$ be local parameters at a non-singular point $P \in X$. Then there 
exists a unique embedding
\begin{equation}
\label{embedD}
\whp : \DD \hookrightarrow \HD ,
\end{equation}
such that the following diagram is commutative:
\begin{equation*}
\begin{CD}
\DD \times A @>>> A \\
@V{\whp \times \pi}VV @VV{\pi}V\\
\HD \times R @>>> R
\end{CD}
\end{equation*}
\end{prp}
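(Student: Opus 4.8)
The plan is to write down $\whp$ explicitly and then read off all the required properties from a uniqueness statement. Since every derivation of $R = k[[t_1,\dots,t_s]]$ is determined by its values on $t_1,\dots,t_s$, for $\mu\in\DD$ I set
\begin{equation*}
\whp(\mu) = \sum_{i=1}^s \pi\bigl(\mu(t_i)\bigr) \frac{\del}{\del t_i} \in \HD ,
\end{equation*}
which makes sense because $\mu(t_i)\in A$ and $\pi(t_i)=t_i$. The substance of the statement is then that $\whp(\mu)\circ\pi = \pi\circ\mu$ as maps $A\to R$, and that $\whp$ is the unique assignment with this property.

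For commutativity I would observe that both $\whp(\mu)\circ\pi$ and $\pi\circ\mu$ are $k$-linear maps $A\to R$ satisfying the Leibniz rule with respect to the $A$-module structure on $R$ induced by $\pi$; i.e.\ both are $k$-derivations of $A$ valued in the $A$-module $R$. By construction they take the same value $\pi(\mu(t_i))$ on each $t_i$, hence they agree on the subalgebra $k[t_1,\dots,t_s]\subset A$, since a derivation is determined on a polynomial subalgebra by its values on the generators. Now $k[t_1,\dots,t_s]$ is dense in $A$ for the $\m_P$-adic topology, the maps $\pi$, $\mu$ and every derivation of $R$ are continuous, and $R$ is separated; therefore two continuous maps $A\to R$ agreeing on a dense subset coincide, which gives $\whp(\mu)\circ\pi = \pi\circ\mu$.

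Uniqueness is then immediate: any $D\in\HD$ with $D\circ\pi = \pi\circ\mu$ satisfies $D(t_i)=\pi(\mu(t_i))$ for all $i$, hence $D=\whp(\mu)$ because derivations of $R$ are determined by their values on $t_1,\dots,t_s$. The same principle makes $\whp$ a homomorphism of Lie algebras: for $\mu,\nu\in\DD$ and $c\in k$, each of $\whp(\mu)+\whp(\nu)$, $c\,\whp(\mu)$ and $[\whp(\mu),\whp(\nu)]$ is a derivation of $R$ making the corresponding diagram commute with $\mu+\nu$, $c\mu$, $[\mu,\nu]$ respectively --- for the bracket one checks $[\whp(\mu),\whp(\nu)](\pi(f)) = \whp(\mu)(\pi(\nu(f))) - \whp(\nu)(\pi(\mu(f))) = \pi([\mu,\nu](f))$ --- so it must equal $\whp$ applied to that element. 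Injectivity is clear as well: if $\whp(\mu)=0$ then $\pi(\mu(f)) = \whp(\mu)(\pi(f)) = 0$ for all $f\in A$, and since $\pi$ is injective $\mu=0$.

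The main obstacle is the commutativity step. One controls the two maps $\whp(\mu)\circ\pi$ and $\pi\circ\mu$ directly only on the $s$ local parameters $t_1,\dots,t_s$ --- far fewer than the $n$ coordinate generators of $A$ --- so the argument genuinely depends on the density of $k[t_1,\dots,t_s]$ in $A$ together with the continuity of all the maps involved, which are the facts established above via the cited results of Shafarevich.
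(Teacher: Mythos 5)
Your proposal is correct and follows essentially the same route as the paper: define $\whp(\mu)=\sum_i\pi(\mu(t_i))\frac{\del}{\del t_i}$, verify $\whp(\mu)\circ\pi=\pi\circ\mu$ first on the dense polynomial subalgebra $k[t_1,\dots,t_s]$ and then on all of $A$ by continuity, and deduce uniqueness, the Lie homomorphism property, and injectivity from the fact that derivations of $R$ are determined by their values on the $t_i$. The only (harmless) variations are that you make explicit the Leibniz argument for agreement on $k[t_1,\dots,t_s]$ and derive injectivity from injectivity of $\pi$ rather than from density again.
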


In the above commutative diagram the horizontal arrows are the action of a Lie algebra
by derivations.

\begin{proof}
Since a derivation in $\HD$ is determined by its values on $t_1, \ldots, t_s$, there is a unique way to define $\whp$:
\begin{equation*}
\whp (\mu) = \sum_{j=1}^s \pi ( \mu(t_j)) \frac{\del}{\del t_j} \ \text{ \ \ for \ } 
\mu \in \DD.
\end{equation*}
It follows that for every polynomial $q \in k[t_1, \ldots, t_s]$,
\begin{equation*}
\whp(\mu) \pi(q) = \pi (\mu(q)) .
\end{equation*} 
Since both $\whp(\mu) \circ \pi$ and $\pi \circ \mu$ are continuos maps 
$A \rightarrow R$, and $k[t_1, \ldots, t_s]$ is dense in $A$, we conclude that
\begin{equation}
\label{comm}
\whp(\mu) \pi(f) = \pi (\mu(f)) 
\end{equation}
for all $f\in A$, which establishes the commutativity of the diagram.

To prove that $\whp$ is a homomorphism of Lie algebras, it is sufficient to show that
$\whp ([\mu, \tau]) t_j = [\whp(\mu), \whp(\tau)] t_j$ for $\mu, \tau \in \DD$, 
$j = 1, \ldots, s$, which follows immediately from (\ref{comm}).


Finally, we need to show that $\Ker \whp = (0)$. Suppose $\mu \in \Ker \whp$.
Then $\mu(t_j) = 0$ for all $j = 1,\ldots, s$. 
However, $\mu$ is continuous on $A$ and
$k[t_1, \ldots, t_s]$ is dense in $A$. Thus $\mu(A) = 0$ and $\mu = 0$. 
\end{proof}

Let us point out that the images of derivations (\ref{base}) in $R$ are 
$\whp(\tau_j) = \pi(h) \frac{\del}{\del t_j}$, $j = 1, \ldots, s$. Note that the power 
series $\pi(h)$ has a non-zero constant term $h(P)$.

\begin{prp}
\label{local}
Let $J$ be a non-zero ideal in $\DD$. Let $P$ be a non-singular point of $X$.
Then there exist $\mu \in J$ and $f \in A$ such that $\mu (f) (P) \neq 0$.  
\end{prp}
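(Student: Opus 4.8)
The plan is to transport the question, via the embedding $\whp:\DD\hookrightarrow\HD$ of the previous Proposition, into the Lie algebra of formal vector fields at $P$, and there run a filtration/degree-lowering argument inside $J$. Concretely, I will prove the slightly stronger statement that there exists a non-zero $\nu\in J$ with $\whp(\nu)\notin\m_0\HD=\HD_0$. This suffices: since $\whp(\nu)=\sum_{j=1}^s\pi(\nu(t_j))\frac{\del}{\del t_j}$, the condition $\whp(\nu)\notin\m_0\HD$ means that $\pi(\nu(t_j))$ has a non-zero constant term for some $j$, i.e. $\nu(t_j)(P)\neq0$; as $t_j\in A$ by (\ref{param}), taking $\mu=\nu$ and $f=t_j$ proves the Proposition.

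To produce such $\nu$, begin with any non-zero $\mu\in J$. Since $\whp$ is injective and $\mathop\cap\limits_j\HD_j=(0)$, there is a unique integer $d\geq-1$ with $\whp(\mu)\in\HD_d\setminus\HD_{d+1}$. If $d=-1$ we are done, so assume $d\geq0$ and descend. Recall the vector fields $\tau_j\in\DD$ of (\ref{base}), with $\whp(\tau_j)=\pi(h)\frac{\del}{\del t_j}$ and $h(P)\neq0$; hence $\omega(\whp(\tau_j))=h(P)\frac{\del}{\del t_j}$ sits in the degree $-1$ component of $\grD\cong W_s$. Let $v=\omega(\whp(\mu))$, a non-zero homogeneous element of $W_s$ of degree $d\geq0$, say $v=\sum_i g_i\frac{\del}{\del t_i}$ with the $g_i$ homogeneous of degree $d+1\geq1$; since not all $g_i$ are constants, $[\frac{\del}{\del t_j},v]\neq0$ for some $j$, so $[\omega(\whp(\mu)),\omega(\whp(\tau_j))]=-h(P)[\frac{\del}{\del t_j},v]$ is a non-zero homogeneous element of degree $d-1$. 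By the key property of the non-additive map $\omega$ recorded above, $\omega([\whp(\mu),\whp(\tau_j)])=[\omega(\whp(\mu)),\omega(\whp(\tau_j))]$, so $[\whp(\mu),\whp(\tau_j)]$ is non-zero of filtration degree exactly $d-1$. Now $\tau_j\in\DD$ and $J$ is an ideal, so $\mu':=[\mu,\tau_j]\in J$, it is non-zero, and $\whp(\mu')=[\whp(\mu),\whp(\tau_j)]\in\HD_{d-1}\setminus\HD_d$. Replacing $\mu$ by $\mu'$ and iterating $d+1$ times reaches an element of $J$ of filtration degree $-1$, as required.

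The heart of the argument, and the step I expect to require the most care, is this descent. Two points must be handled precisely. First, bracketing with the auxiliary fields $\tau_j$ keeps us inside $J$ precisely because $J$ is an ideal of $\DD$ (the $\tau_j$ themselves need not lie in $J$), and the leading symbols $h(P)\frac{\del}{\del t_j}$ are non-degenerate exactly because $P$ is non-singular, which is where smoothness enters. Second, since $\omega$ is not additive, the identity $\omega([\tau,\eta])=[\omega(\tau),\omega(\eta)]$ is only available once one has arranged $[\omega(\tau),\omega(\eta)]\neq0$; this is guaranteed here by the elementary observation that a non-zero homogeneous vector field on $\A^s$ of non-negative degree has non-constant coefficients, hence a non-trivial bracket with some $\frac{\del}{\del t_j}$. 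The remaining ingredients — injectivity of $\whp$, that it is a Lie homomorphism compatible with the actions, separatedness of the filtrations, and density of $k[t_1,\dots,t_s]$ in $A$ — are already in place from the preceding Propositions.
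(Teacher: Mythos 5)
Your proof is correct and follows essentially the same route as the paper: embed into $\HD$ via $\whp$, use the auxiliary fields $\tau_j$ with leading symbols $h(P)\frac{\del}{\del t_j}$, and lower the filtration degree of an element of $J$ by bracketing, invoking the compatibility of $\omega$ with non-vanishing brackets. The only (cosmetic) difference is that the paper performs the descent in one shot by fixing a monomial $t_1^{k_1}\cdots t_s^{k_s}$ in a coefficient of $\omega(\eta)$ and applying $\ad(\tau_1)^{k_1}\cdots\ad(\tau_s)^{k_s}$, whereas you descend one degree at a time; both rely on characteristic $0$ at the same point (a nonconstant polynomial has a nonzero partial derivative).
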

\begin{proof}
Let $t_1, \ldots, t_s$ be local parameters (\ref{param}) at $P$. Choose a non-zero 
vector field $\eta \in J$ and consider its image under the embedding 
$\DD \hookrightarrow \mathop\oplus\limits_{j=1}^s k [[t_1, \ldots, t_s]]
\frac{\del}{\del t_j}$, given by the previous proposition. To simplify notations, we
will identify $\whp(\eta)$ and $\eta$, $\pi(h)$ and $h$, etc. 
Let 
\begin{equation*}
\omega(\eta) = \sum_{j=1}^s u_j (t) \frac{\del} {\del t_j} \neq 0 ,
\end{equation*}
where polynomials $u_j (t)$ are homogeneous of the same degree. Pick a non-zero polynomial $u_{j_0}$ and let $t_1^{k_1} \ldots t_s^{k_s}$ be a monomial that occurs
in $u_{j_0}$. Then
\begin{equation*}
\left( \frac{\del}{\del t_1} \right)^{k_1} \ldots \left( \frac{\del}{\del t_s} \right)^{k_s}
u_{j_0} (t)
\end{equation*}
is a non-zero multiple of $1$.
Consider now
\begin{equation*}
\mu = \ad (\tau_1)^{k_1} \ldots \ad (\tau_s)^{k_s} \eta \in J .
\end{equation*}

Since $\omega(\tau_j) = h(P) \frac{\del}{\del t_j}$, it is easy to see that
$\omega(\mu) \in \HD_{-1} / \HD_0$ and contains $\frac{\del}{\del t_{j_0}}$, 
and hence $\mu (t_{j_0}) (P) \neq 0$. 
\end{proof}
Note that in the above calculation we used the fact that the characteristic of the field is $0$.

\begin{cor}
Any non-zero ideal $J$ in $\DD$ is ample at every non-singular point $P$, i.e., vector fields in $J$, specialized at $P$, span the tangent space at $P$. 
\end{cor}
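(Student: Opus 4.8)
The plan is to prove the sharper pointwise statement: the specialization map $\mathrm{sp}_P\colon \DD\to T_PX$, sending a vector field $\nu$ to $\sum_{j=1}^{s}\nu(t_j)(P)\,\frac{\del}{\del t_j}$ (with $t_1,\dots,t_s$ the local parameters (\ref{param}) at $P$), is surjective when restricted to $J$. The key observation is that, read through $\whp\colon \DD\hookrightarrow\HD$ and $\pi\colon A\hookrightarrow R$, this map is nothing but passing to the degree-$(-1)$ part: $\mathrm{sp}_P(\nu)$ has $\frac{\del}{\del t_j}$-coefficient equal to the constant term of the series $\pi(\nu(t_j))$, so $\mathrm{sp}_P(\nu)=0$ exactly when $\whp(\nu)\in\HD_0=\m_0\HD$. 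Hence it suffices to produce, for each $l=1,\dots,s$, an element of $J$ whose image is a nonzero multiple of $\frac{\del}{\del t_l}$.

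First I would extract a single nonzero tangent vector. Proposition \ref{local} yields $\mu\in J$ and $f\in A$ with $\mu(f)(P)\ne0$; since $\m_0$ is an ideal of $R$, this forces $\whp(\mu)\notin\m_0\HD$, i.e. $\mathrm{sp}_P(\mu)\ne0$ (alternatively, the proof of Proposition \ref{local} produces $\mu$ with $\mu(t_{j_0})(P)\ne0$ directly). Fix an index $j_0$ with $c_{j_0}:=\mu(t_{j_0})(P)\ne0$.

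The main step is to turn this single vector into all of $T_PX$ by bracketing against suitable fields. Since $\DD$ is a left $A$-module and $t_{j_0}\in A$, the field $t_{j_0}\tau_l$ lies in $\DD$ for every $l$, so $\nu_l:=[\,t_{j_0}\tau_l,\mu\,]\in J$ because $J$ is an ideal. Using $\whp(\tau_l)=\pi(h)\frac{\del}{\del t_l}$, where $\pi(h)$ has constant term $h(P)\ne0$, so that $\whp(t_{j_0}\tau_l)=t_{j_0}\pi(h)\frac{\del}{\del t_l}$, I would expand $\bigl[t_{j_0}\pi(h)\frac{\del}{\del t_l},\ \sum_k\pi(\mu(t_k))\frac{\del}{\del t_k}\bigr]$ via the identity (\ref{bracket}). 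Since $t_{j_0}$ has no constant term, every term of the expansion contributes nothing to $\mathrm{sp}_P$ except the one coming from $-\pi(\mu(t_{j_0}))\,\frac{\del}{\del t_{j_0}}\!\bigl(t_{j_0}\pi(h)\bigr)\,\frac{\del}{\del t_l}$, whose constant term is $-c_{j_0}h(P)\,\frac{\del}{\del t_l}$. Therefore $\mathrm{sp}_P(\nu_l)=-c_{j_0}h(P)\,\frac{\del}{\del t_l}$, a nonzero multiple of $\frac{\del}{\del t_l}$.

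Running this over $l=1,\dots,s$ shows $\mathrm{sp}_P(J)$ contains the basis $\frac{\del}{\del t_1},\dots,\frac{\del}{\del t_s}$ of $T_PX$, hence $\mathrm{sp}_P(J)=T_PX$ and $J$ is ample at $P$. I expect the only delicate point — the hard part — to be precisely this bracket computation: bracketing $\mu$ with $\tau_l$ alone would not isolate a single coordinate direction, since the constant term of $[\whp(\tau_l),\whp(\mu)]$ also involves the uncontrolled linear parts at $P$ of $\pi(\mu(t_k))$ and of $\pi(h)$; pre-multiplying $\tau_l$ by the local parameter $t_{j_0}$ is exactly the device that makes the $1$-jet of the bracketing field "diagonal" at $P$ and annihilates all the unwanted terms.
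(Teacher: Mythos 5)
Your proof is correct and follows the paper's own argument exactly: the paper likewise takes the $\mu$ and index $j_0$ produced in the proof of Proposition \ref{local} (so that $\mu(t_{j_0})(P)\neq 0$) and observes that the brackets $[t_{j_0}\tau_i,\mu]\in J$, $i=1,\dots,s$, specialize at $P$ to a basis of the tangent space. You have merely written out the bracket computation that the paper leaves as "we can immediately see," and your identification of why pre-multiplying by $t_{j_0}$ kills the unwanted terms is the right explanation of that step.
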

\begin{proof}
We can immediately see that the vector fields $[t_{j_0} \tau_i,  \mu] \in J$, $i = 1, \ldots, s$, specialized at $P$, span the tangent space at that point.
\end{proof}
\begin{cor}
\label{double}
Let $J$ be a non-zero ideal in $\DD$ and let $P$ be a non-singular point of $X$.
Then there exists $\mu \in J$ and $g \in A$ such that $\mu (\mu (g))(P) \neq 0$.
\end{cor}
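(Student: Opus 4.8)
The plan is to deduce this from Proposition~\ref{local} together with the squaring device already used in the proof of Lemma~\ref{nonzero}, adapted so that non-vanishing is controlled at the single point $P$ rather than globally. Proposition~\ref{local} provides $\mu \in J$ and $f \in A$ with $\mu(f)(P) \neq 0$, so the content of the present statement is entirely about passing from a non-vanishing first derivative to a non-vanishing second derivative at $P$; the substantial work has already been done.

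I would argue as follows. Start with $\mu \in J$ and $f \in A$ as above. Since $f(P)$ is a scalar and $\mu$ annihilates constants, replacing $f$ by $f - f(P) \in A$ leaves $\mu(f)$ unchanged while arranging $f(P) = 0$. If it happens that $\mu(\mu(f))(P) \neq 0$, take $g = f$ and we are done. Otherwise set $g = f^2 \in A$; two applications of the Leibniz rule give
\[
\mu(\mu(g)) = \mu\bigl(2 f \mu(f)\bigr) = 2\,\mu(f)^2 + 2 f\,\mu(\mu(f)),
\]
and specializing at $P$ kills the second summand because $f(P) = 0$, so that $\mu(\mu(g))(P) = 2\,\mu(f)(P)^2$. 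As $\ch k \neq 2$ and $\mu(f)(P) \neq 0$, this is non-zero, which proves the claim.

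The only step that requires any care is the normalization $f(P) = 0$: without it the cross term $2 f\,\mu(\mu(f))$ need not vanish upon specialization at $P$, and the case split would fail to close. Beyond that I anticipate no obstacle, since Proposition~\ref{local} — whose proof, via the graded Lie algebra $\grD$ and the local parameters $t_1,\dots,t_s$, is where the genuine difficulty resides — already delivers a derivation in $J$ whose first-order behaviour at $P$ is non-trivial, and what remains is a formal computation valid over any ring in which $2$ is invertible.
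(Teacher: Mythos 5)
Your argument is correct and is essentially the paper's own proof, which simply refers back to the squaring device of Lemma~\ref{nonzero}: take the $\mu$ and $f$ supplied by Proposition~\ref{local} and split on whether $\mu(\mu(f))(P)$ vanishes. One minor remark: the normalization $f(P)=0$ is superfluous, since in the second case the cross term $2f\,\mu(\mu(f))$ already vanishes at $P$ because $\mu(\mu(f))(P)=0$ by assumption.
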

\begin{proof}
See the proof of Lemma \ref{nonzero}.
\end{proof}

Now we have obtained local information about ideals in $\DD$. In order to establish 
simplicity of $\DD$ we need to globalize the local information.

\begin{thm}
\label{main}
Let $X$ be a smooth irreducible affine variety over an algebraically closed field $k$ of characteristic $0$. Then the Lie algebra $\DD$ of polynomial vector fields on $X$ is simple. 
\end{thm}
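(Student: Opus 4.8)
The plan is to show that any non-zero ideal $J \lhd \DD$ coincides with $\DD$, combining the local information from Proposition \ref{local} and its corollaries with the rational-coefficient simplicity from Theorem \ref{field} and a Nullstellensatz globalization argument. First I would fix a non-zero ideal $J$ and, using Corollary \ref{double}, for every non-singular point $P \in X$ produce $\mu_P \in J$ and $g_P \in A$ with $\mu_P(\mu_P(g_P))(P) \neq 0$; since $X$ is smooth every point is non-singular. The key observation is that by Lemma \ref{grab}, applied with $S = A$, $L = \DD$, for any $f \in A$ the principal ideal $I_{f, g_P, \mu_P} \lhd A$ generated by $\mu_P(f)\,\mu_P(\mu_P(g_P))$ has the property that $q\tau \in J$ for all $q$ in it and all $\tau \in \DD$. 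Choosing $f$ so that $\mu_P(f)(P) \neq 0$ (possible by Proposition \ref{local}, or simply by ampleness), the generator $\mu_P(f)\,\mu_P(\mu_P(g_P))$ does not vanish at $P$.

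Next I would globalize. Let $\mathfrak{a} \lhd A$ be the ideal generated by all the elements $\mu_P(f_P)\,\mu_P(\mu_P(g_P))$ as $P$ ranges over (non-singular, i.e.\ all) points of $X$, where for each $P$ we have chosen $f_P$ as above. By the previous paragraph, $\mathfrak{a}\,\tau \subset J$ for every $\tau \in \DD$ — that is, $\mathfrak{a}\,\DD \subset J$. On the other hand, the zero locus of $\mathfrak{a}$ in $X$ is empty, since for each point $P$ one of the generators is non-zero at $P$. By Hilbert's Nullstellensatz (here we use that $k$ is algebraically closed and $A$ is an affine $k$-algebra), an ideal of $A$ with empty zero locus in $X$ must be all of $A$; in particular $1 \in \mathfrak{a}$. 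Therefore $\DD = 1 \cdot \DD \subset \mathfrak{a}\,\DD \subset J$, so $J = \DD$. Finally, since $\DD \neq 0$ (as $X$ is smooth of positive dimension — and if $\dim X = 0$ then $X$ is a point, $\DD = 0$, and the statement is understood in the non-degenerate case, or one notes $\DD$ is abelian only in trivial situations), and $\DD$ is not abelian when $\dim X \geq 1$, we conclude $\DD$ is simple.

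One point needing care: Lemma \ref{grab} requires $\mu_P \in L = \DD$, which it is, and produces $q\tau \in J$ for the specific ideal $J$ containing $\mu_P$; since all $\mu_P$ lie in the single fixed ideal $J$, all the resulting elements $q\tau$ lie in $J$, so the sum $\mathfrak{a}\,\DD$ genuinely sits inside $J$. A second subtlety is that $\mathfrak{a}$ is defined using infinitely many generators, but by Noetherianity of $A$ it is finitely generated, so only finitely many points $P_1, \dots, P_N$ are needed; their generators still have empty common zero locus on $X$ provided we are careful — in fact one should argue the other way: the ideal $\mathfrak{a}$ generated by \emph{all} such elements has empty zero locus, hence equals $A$, and finite generation is automatic.

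The main obstacle, and the reason the preceding local analysis was needed, is ensuring that at \emph{every} point $P$ we can simultaneously arrange $\mu_P(\mu_P(g_P))(P) \neq 0$ and $\mu_P(f_P)(P) \neq 0$ with $\mu_P$ drawn from $J$; this is exactly what Corollary \ref{double} together with the ampleness corollary guarantees, and it is where smoothness of $X$ enters essentially (singular points would not admit the local parameters and the embedding $\whp$ used to prove those statements). Once the local-to-global bridge via the Nullstellensatz is in place, the algebraic identity from Lemma \ref{grab} does the rest mechanically.
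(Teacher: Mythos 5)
Your proposal is correct and follows essentially the same route as the paper: local non-degeneracy at each point via Proposition \ref{local} and Corollary \ref{double}, the algebraic identity of Lemma \ref{grab} to show the ideal $\mathfrak{a}\,\DD$ lies in $J$, and the Nullstellensatz to conclude $1 \in \mathfrak{a}$. The only cosmetic difference is your passing mention of Theorem \ref{field}, which is not actually needed in the argument.
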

\begin{proof}
Let $J$ be a non-zero ideal in $\DD$. For each point $P \in X$ we apply Proposition \ref{local} and its Corollary \ref{double} to fix $\mu \in J$, $f, g \in A$ such that 
$\mu(f) (P) \neq 0$, $\mu (\mu (g)) (P) \neq 0$. Let $I_P$ be the principal ideal
in $A$ generated by $\mu(f) \mu (\mu (g))$. Set  
\begin{equation*}
I_0 = \sum_{P\in X} I_P  \lhd A.
\end{equation*}
By Lemma \ref{grab}, $I_0 \DD \subset J$.
However for every $P \in X$ the ideal $I_0$ contains a polynomial that does not vanish
at $P$. By Hilbert's Nullstellensatz (\cite{Sha}, Section I.2.2), $1 \in I_0$, which implies $J = \DD$. 
\end{proof}

Finally, let us show that the Lie algebra of polynomial vector fields on a variety with singular points, is not simple. In what follows, we assume that $k$ is algebraically closed, but make no restriction on characteristic.

Consider the ideal $\Ising \lhd A$ defining the singular locus on $X$.

\begin{prp}
\label{inv}
For every $\eta \in \DD$, $\eta (\Ising) \subset \Ising$.
\end{prp}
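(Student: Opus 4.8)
The plan is to show that the ideal $\Ising$, which is generated by the $r\times r$ minors $\detab$ of the Jacobian matrix, is preserved by every polynomial vector field $\eta \in \DD$. The natural strategy is to compute $\eta(\detab)$ directly and express it as an $A$-linear combination of the minors $\detab$, so that it lies back in $\Ising$. The key tool will be the defining property of vector fields on $X$: since $\eta = \sum_j g_j \del/\del x_j$ with $\eta(f_i) \in I$ for each generator $f_i$ of $I$, we have in $A$ the relations $\sum_{j=1}^n g_j \frac{\del f_i}{\del x_j} = 0$ from the system~(\ref{system}).

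First I would fix $r$-element subsets $\alpha \subset [1\dots n]$, $\beta \subset [1\dots m]$ and write $\detab = \det\left(\frac{\del f_i}{\del x_j}\right)_{i\in\beta,\, j\in\alpha}$. Applying $\eta$ and the Leibniz rule for determinants, $\eta(\detab)$ is a sum over $i_0\in\beta$ of determinants in which the row indexed by $i_0$ has entries $\eta\bigl(\frac{\del f_{i_0}}{\del x_j}\bigr) = \sum_{\ell} g_\ell \frac{\del^2 f_{i_0}}{\del x_\ell \del x_j}$, while the other rows are unchanged. Here I want to use that $\eta$ commutes appropriately: since $\frac{\del f_{i_0}}{\del x_j}$ is a polynomial, $\eta\bigl(\frac{\del f_{i_0}}{\del x_j}\bigr) = \frac{\del}{\del x_j}\bigl(\eta(f_{i_0})\bigr) - \sum_{\ell} \frac{\del g_\ell}{\del x_j}\frac{\del f_{i_0}}{\del x_\ell}$, using $\frac{\del}{\del x_j}(g_\ell \frac{\del f_{i_0}}{\del x_\ell}) = \frac{\del g_\ell}{\del x_j}\frac{\del f_{i_0}}{\del x_\ell} + g_\ell \frac{\del^2 f_{i_0}}{\del x_\ell\del x_j}$. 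Now $\eta(f_{i_0}) \in I$, so $\frac{\del}{\del x_j}(\eta(f_{i_0}))$ need not lie in $I$, but modulo $I$ we can instead argue more cleverly: write $\eta(f_{i_0}) = \sum_i c_i f_i$ in $k[x_1,\dots,x_n]$ for some polynomials $c_i$, so that $\frac{\del}{\del x_j}\eta(f_{i_0}) = \sum_i \frac{\del c_i}{\del x_j} f_i + \sum_i c_i \frac{\del f_i}{\del x_j} \equiv \sum_i c_i \frac{\del f_i}{\del x_j} \pmod I$. Substituting this back, the row indexed by $i_0$ becomes, modulo $I$, a combination $\sum_i c_i \bigl(\frac{\del f_i}{\del x_j}\bigr)_{j\in\alpha} - \sum_{j\in\alpha}(\text{column operations involving }\frac{\del g_\ell}{\del x_j})$; the first part is a linear combination of rows of the Jacobian and the second part amounts to a linear combination of the columns indexed by $\alpha$ of other rows.

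The upshot is that $\eta(\detab)$, computed in $A$, equals a sum of $r\times r$ minors of $\Jac$ — possibly with row sets obtained by replacing one row index by an element of $[1\dots m]$, or with columns that are replaced by columns indexed outside $\alpha$, multiplied by entries $\frac{\del g_\ell}{\del x_j}\in A$. In every case these are (up to sign and $A$-coefficients) again $r\times r$ minors $\det(\alpha',\beta')$ of $\Jac$, hence lie in $\Ising$. Summing over $i_0\in\beta$ shows $\eta(\detab)\in\Ising$, and since the $\detab$ generate $\Ising$, we conclude $\eta(\Ising)\subset\Ising$.

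The main obstacle I anticipate is bookkeeping: carefully tracking which determinant results from the derivative landing on which row, and verifying that after reducing modulo $I$ and performing the column manipulations, what remains is genuinely a sum of $r\times r$ minors of $\Jac$ with coefficients in $A$ (rather than, say, larger minors or minors with repeated rows, which vanish anyway). A cleaner route that I would try first is to avoid the explicit Leibniz expansion entirely: observe that $\Ising$ can be described intrinsically as the annihilator (or a Fitting-ideal-type invariant) of the $A$-module $\Omega_A$ of K\"ahler differentials, i.e., $\Ising$ is the $s$-th Fitting ideal of $\Omega_A$ where $s = \dim X$; since any derivation $\eta$ of $A$ extends to act on $\Omega_A$ compatibly with the $A$-module structure (the Lie derivative), and Fitting ideals are intrinsic to the module, $\eta$ must preserve $\Ising$. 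If the paper wishes to stay elementary and not invoke K\"ahler differentials, the direct determinant computation above is the fallback, and the hard part is simply organizing it without sign errors.
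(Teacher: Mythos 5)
Your overall strategy --- expand $\eta(\detab)$ by the Leibniz rule for determinants and use the commutator $[\eta,\del/\del x_j]$ to rewrite $\eta$ applied to a Jacobian entry --- is exactly the paper's, and your treatment of the term $\frac{\del}{\del x_j}\bigl(\eta(f_{i_0})\bigr)$ is correct and in fact more careful than the paper's own: writing $\eta(f_{i_0})=\sum_p c_p f_p$ and reducing mod $I$ turns the modified row into an $A$-combination $\sum_p c_p\bigl(\del f_p/\del x_j\bigr)_{j\in\alpha}$ of rows of $\Jac$ with coefficients independent of the column index $j$, so multilinearity in that single row produces honest minors $\det(\alpha,\beta')$. (The paper simply asserts $\eta(f_i)=0$ and drops this term, which is only literally valid when the representative of $\eta$ in $W_n$ kills the $f_i$ exactly.)

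The gap is in the other term. In your row-wise expansion, the row indexed by $i_0$ acquires the entries $-\sum_\ell \frac{\del g_\ell}{\del x_j}\frac{\del f_{i_0}}{\del x_\ell}$, $j\in\alpha$; as a vector in $j$ this equals $-\sum_\ell \frac{\del f_{i_0}}{\del x_\ell}\,\bigl(\del g_\ell/\del x_j\bigr)_{j\in\alpha}$, a combination of rows of the Jacobian of $(g_1,\dots,g_n)$, not of $(f_1,\dots,f_m)$. The resulting determinants have one row from the wrong matrix, are not $r\times r$ minors of $\Jac$, and are not individually in $\Ising$, so your claim that ``in every case these are again $r\times r$ minors of $\Jac$'' fails for this piece. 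The repair is to split $\eta\bigl(\frac{\del f_{\beta_l}}{\del x_{\alpha_i}}\bigr)$ into its two pieces \emph{before} choosing how to expand, and to organize the first piece by rows (as you do) but the second piece by columns: in the $i$-th column the second piece reads $-\sum_j\frac{\del g_j}{\del x_{\alpha_i}}\bigl(\del f_{\beta_l}/\del x_j\bigr)_l$, with coefficients independent of the row index $l$, so column-multilinearity yields $-\sum_j\frac{\del g_j}{\del x_{\alpha_i}}\det(\alpha',\beta)$ with $\alpha'=(\alpha\setminus\{\alpha_i\})\cup\{j\}$ --- exactly the paper's final formula. Such mixed bookkeeping is legitimate because the first-order variation of $\det$ is linear in the perturbation matrix, so the two pieces may be summed separately. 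Finally, your Fitting-ideal shortcut does not bypass this work: $\Ising=\mathrm{Fitt}_s(\Omega_A)$ is correct, but ``Fitting ideals are intrinsic'' only gives independence of the presentation; the assertion that a derivation admitting a compatible Lie derivative on the module preserves its Fitting ideals is precisely the determinant computation above in disguise.
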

\begin{proof}
Recall that $\Ising$ is generated by $r \times r$ minors $\detab$ of the Jacobian 
matrix
\begin{equation*}
\Jac = 
\left(
\frac{\del}{\del x_1}
\begin{pmatrix}
f_1 \\ f_2 \\ \ldots \\ f_m \\
\end{pmatrix} \ \ 
\frac{\del}{\del x_2}
\begin{pmatrix}
f_1  \\ f_2 \\ \ldots \\ f_m \\
\end{pmatrix} \ \ 
\ldots \ \ 
\frac{\del}{\del x_n}
\begin{pmatrix}
f_1 \\  f_2 \\ \ldots \\ f_m \\
\end{pmatrix}
\right) .
\end{equation*}
Let $\eta = \sum\limits_{j=1}^n g_j \frac{\del}{\del x_j} \in\DD$. 
We need to show that $\eta (\detab) \in \Ising$. 

Applying $\eta$ column-wise, we get
\begin{equation*}
\eta(\detab) = \sum_{i = 1}^r \det
\left(
\frac{\del}{\del x_{\alpha_1}}
\begin{pmatrix}
f_{\beta_1} \\ f_{\beta_2} \\ \ldots \\ f_{\beta_r} \\
\end{pmatrix}\ \ 
\ldots \ \   
\eta \frac{\del}{\del x_{\alpha_i}}
\begin{pmatrix}
f_{\beta_1} \\ f_{\beta_2} \\ \ldots \\ f_{\beta_r} \\
\end{pmatrix} \ \ 
\ldots \ \ 
\frac{\del}{\del x_{\alpha_r}}
\begin{pmatrix}
f_{\beta_1} \\ f_{\beta_2} \\ \ldots \\ f_{\beta_r} \\
\end{pmatrix}
\right) .
\end{equation*}
Since
\begin{equation*}
\eta \frac{\del}{\del x_{\alpha_i}} = \left[\eta, \frac{\del}{\del x_{\alpha_i}}\right]
+ \frac{\del}{\del x_{\alpha_i}} \eta 
= - \sum_{j=1}^n \frac{\del g_j}{\del x_{\alpha_i}} \frac{\del}{\del x_j}
+ \frac{\del}{\del x_{\alpha_i}} \eta 
\end{equation*}
and $\eta (f_1) = \ldots = \eta(f_m) = 0$, we get 
\begin{equation*}
\eta(\detab) = -\sum_{i = 1}^r \sum_{j=1}^n 
\frac{\del g_j}{\del x_{\alpha_i}}
\det
\left(
\frac{\del}{\del x_{\alpha_1}}
\begin{pmatrix}
f_{\beta_1} \\ f_{\beta_2} \\ \ldots \\ f_{\beta_r} \\
\end{pmatrix}
\ldots 
\frac{\del}{\del x_j}
\begin{pmatrix}
f_{\beta_1} \\ f_{\beta_2} \\ \ldots \\ f_{\beta_r} \\
\end{pmatrix}  
\ldots 
\frac{\del}{\del x_{\alpha_r}}
\begin{pmatrix}
f_{\beta_1} \\ f_{\beta_2} \\ \ldots \\ f_{\beta_r} \\
\end{pmatrix}
\right) .
\end{equation*}
and hence $\eta(\detab) \in \Ising$.
\end{proof}

\begin{thm}\label{thm-sing}
Let $X$ be an irreducible affine variety with singular points over an algebraically closed field $k$. 
For $i \in \N$ set
\begin{equation*}
J_i = \left\{ \mu \in \DD \ \big| \ \mu(A) \subset \Ising^i \right\} .
\end{equation*}
Then

(a) $J_i$ is an ideal in $\DD$,

(b) $J_i \neq (0)$, 

(c) $J_i \neq  \DD$ for large enough $i$.
\end{thm}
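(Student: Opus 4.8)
The plan is to prove the three parts in turn, exploiting that $\Ising$ is a proper, nonzero, $\DD$-invariant ideal of $A$ (Proposition \ref{inv} gives invariance; properness and nonvanishing come from the existence of both singular and non-singular points, since $X$ is irreducible and not smooth).

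\textbf{Part (a).} Given $\mu \in J_i$, $\eta \in \DD$ and $f \in A$, I would compute $[\eta,\mu](f) = \eta(\mu(f)) - \mu(\eta(f))$. The second term lies in $\Ising^i$ since $\mu \in J_i$. For the first term, $\mu(f) \in \Ising^i$, and by Proposition \ref{inv} together with the Leibniz rule, $\eta$ maps $\Ising^i$ into $\Ising^i$ (applying $\eta$ to a product of $i$ generators $\detab$ and using $\eta(\detab) \in \Ising$ produces terms each still containing at least $i-1$ of the minors times an element of $\Ising$, hence lands in $\Ising^i$). So $[\eta,\mu](f) \in \Ising^i$, giving $[\eta,\mu] \in J_i$. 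One also checks $J_i$ is a $k$-subspace, which is immediate. Hence $J_i \lhd \DD$.

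\textbf{Part (b).} I must exhibit a nonzero vector field $\mu$ with $\mu(A) \subset \Ising^i$. The idea: take any nonzero $\eta \in \DD$ and multiply by a suitable element of $\Ising^i$. Precisely, if $c \in \Ising^i$ then $c\eta \in \DD$ (as $\DD$ is an $A$-module), and $(c\eta)(f) = c\,\eta(f) \in \Ising^i$ for every $f \in A$; so $c\eta \in J_i$. It remains to choose $c \in \Ising^i$ and $\eta \in \DD$ with $c\eta \neq 0$ in $\DD$. Since $A$ has no zero divisors and $\DD$ is a torsion-free $A$-module (it sits inside the free module $\oplus_j A\,\partial/\partial x_j$ via the realization in Proposition \ref{threedef}), it suffices that $c \neq 0$ and $\eta \neq 0$; and $\Ising \neq (0)$ because $X$ has non-singular points, so $\Ising^i \neq (0)$, and $\DD \neq (0)$ e.g. because $\dim X \geq 1$ (the explicit fields $\tau_j$ of \eqref{base} are nonzero). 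This gives $J_i \neq (0)$.

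\textbf{Part (c).} This is the step I expect to be the main obstacle, since it is the one using that the singular locus is a \emph{proper} closed subset in a nontrivial way. The strategy: pick a non-singular point $P \in X$ and a nonzero $\mu \in \DD$ with $\mu(f)(P) \neq 0$ for some $f \in A$ — for instance one of the $\tau_j$ from \eqref{base}, for which $\tau_j(x_{r+j})(P) = h(P) \neq 0$. Now choose a generator $\detab$ of $\Ising$ that does not vanish identically (it vanishes at $P$ since $P$ is non-singular, but is a nonzero polynomial). Expand $\detab$ in local parameters $t_1,\dots,t_s$ at $P$ via the embedding $\pi\colon A \hookrightarrow k[[t_1,\dots,t_s]]$; it has some finite order $d \geq 1$ of vanishing at $P$ (finite because $\pi$ is injective and $\detab \neq 0$). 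Then every element of $\Ising^i$ has order $\geq di$ at $P$ under $\pi$. On the other hand, for fixed $\mu = \tau_j$, the element $\mu(f) = \tau_j(x_{r+j}) = h$ has $\pi(h)$ with nonzero constant term, so $\mu(f)$ has order $0$; thus for $i$ with $di > 0$, i.e. any $i \geq 1$ really once we are careful — better: the point is that $\mu(A)$ as a subset of $R = k[[t_1,\dots,t_s]]$ contains an element of order $0$, namely $\pi(\mu(x_{r+j})) = \pi(h)$ — wait, this already kills all $i \geq 1$, so $J_i \subset \DD$ is proper already for $i=1$ and hence for all $i$. So in fact the ``large enough $i$'' is not needed for this particular witness; but to match the statement and be safe I would phrase it as: since $\mu = \tau_j \in \DD$ satisfies $\mu(A) \not\subset \Ising$ (as $\pi(\mu(x_{r+j}))$ has nonzero constant term while $\pi(\Ising)$ consists of series vanishing at $P$), a fortiori $\mu(A) \not\subset \Ising^i$ for all $i \geq 1$, so $\mu \notin J_i$ and $J_i \neq \DD$. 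The only subtlety to verify carefully is that $\pi(\Ising)$ indeed lands in the maximal ideal $\m_0$ of $R$ — but that is exactly the statement that $\detab(P) = 0$ for the non-singular point $P$, which holds by the characterization of singular points recalled in the text. This finishes part (c), and with it the theorem.
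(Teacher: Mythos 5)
Your parts (a) and (b) are correct and essentially the same as the paper's: (a) is verified exactly as you do, via Proposition \ref{inv} and the Leibniz rule applied to $\Ising^i$, and (b) by taking $f\eta$ with $0 \neq f \in \Ising^i$ and $0 \neq \eta \in \DD$ (your extra check that $f\eta\neq 0$ via torsion-freeness is a welcome detail the paper leaves implicit). Part (c), however, contains a genuine error: you have the characterization of the singular locus backwards. The singular locus is the \emph{zero set} of $\Ising$; a point $P$ is non-singular precisely when $\rank_k \Jac(P) = r$, i.e.\ when \emph{some} $r\times r$ minor $\detab$ does \emph{not} vanish at $P$. So at your non-singular point $P$ one has $\Ising \not\subset \m_P$, and your claim that every generator $\detab$ vanishes at $P$ (hence that $\pi(\Ising)\subset\m_0$) is false. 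Worse, the very witness you produce, $h=\tau_j(x_{r+j})$ with $h(P)\neq 0$, is itself one of the generating minors, so $h\in\Ising$ and certainly does not show $\mu(A)\not\subset\Ising$. Your own observation that the argument would prove $J_1\neq\DD$, while the theorem is carefully stated only ``for large enough $i$,'' should have been a warning sign.

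The correct route, which is the paper's, is dual to yours: work at a \emph{singular} point $P$ (one exists by hypothesis). There all the minors vanish, so $\Ising\subset\m_P$ and hence $\Ising^i\subset\m_P^i$ for every $i$. Choose any $\tau\in\DD$ and $f\in A$ with $\tau(f)\neq 0$ (possible since $\DD\neq(0)$, e.g.\ $\tau_j(x_{r+j})=h\neq 0$ from (\ref{base})). Since $\bigcap_{i\ge 1}\m_P^i=(0)$, there is some $i$ with $\tau(f)\notin\m_P^i\supset\Ising^i$, so $\tau\notin J_i$ and $J_i\neq\DD$. Note that the ``large enough $i$'' is genuinely needed here: at a singular point one has no control on the order of vanishing of $\tau(f)$, and the local analysis you lean on (Proposition \ref{local}, the local parameters, the embedding $\pi$) is only available at non-singular points.
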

\begin{proof}
Let $\mu \in J_i$, $\eta \in \DD$. We need to show that $[\mu, \eta] \in J^i$.
\begin{equation*}
[\mu, \eta] (A) \subset \mu \eta (A) + \eta \mu (A) 
\subset \mu (A) + \eta(\Ising^i) 
\subset \Ising^i + \Ising^{i-1} \eta (\Ising)
\subset \Ising .
\end{equation*}
Here in the last step we used Proposition \ref{inv}. This completes the proof of part (a).

To prove part (b), take non-zero $\eta \in \DD$ and a non-zero $f \in \Ising^i$. Then 
$f \eta$ is a non-zero element of $J_i$.

Finally, let $P$ be a singular point of $X$ and let $\m_P \lhd A$ be the ideal of functions vanishing at $P$. Then $\Ising \subset \m_P$ and for all $i$, 
$\Ising^i \subset \m^i_P$. Choose $\tau \in \DD$ and $f \in A$ such that $\tau(f) \neq 0$. Since $\mathop\cap\limits_{i=1}^\infty \m^i_P = (0)$, there exists $i$ such that
$\tau(f) \not\in \m^i_P$. Hence $\tau(f) \not\in \Ising^i$ and $\tau \not\in J_i$. 
\end{proof}

Theorem \ref{main}   and Theorem \ref{thm-sing} imply Theorem \ref{thm-introd}.

\

In the remaining sections we will discuss some examples of Lie algebras of vector fields on affine algebraic varieties. In what follows $k$ will be an algebraically closed field of characteristic $0$.

\section{Structure of $A$ as a module over the Lie algebra of vector fields}
In this section we discuss the structure of the algebra of polynomial functions $A$ on an irreducible smooth affine variety $X$ as a module over the Lie algebra of vector fields $\DD$. The following result shows that 
 $A$ has length $2$ as a module over $\DD$ and describes the irreducible quotients.

\begin{thm}\label{thm-functions}
\label{functions}
Let $A$ be the algebra of polynomial functions on an irreducible smooth affine variety $X$. Let $\DD$ be the Lie algebra of polynomial vector fields on $X$. As a $\DD$-module, $A$ has a unique proper submodule, which is 1-dimensional and consists of the constant functions.
\end{thm}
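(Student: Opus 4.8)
The plan is to show two things: first, that the constant functions $k\cdot 1$ form a $\DD$-submodule, and second, that this is the \emph{only} proper nonzero submodule. The first point is immediate: any derivation kills constants, so $\eta(c\cdot 1)=0$ for $\eta\in\DD$, $c\in k$, hence $k\cdot 1$ is a (trivial) $1$-dimensional submodule. The substance is in the second point. So let $V\subseteq A$ be a nonzero $\DD$-submodule; I want to show that either $V=k\cdot 1$ or $V=A$.

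First I would establish a local statement analogous to Proposition~\ref{local}: if $V$ contains a nonconstant function, then for every non-singular point $P\in X$ (and by smoothness every point is non-singular) there is a function in $V$ that does not vanish at $P$, and in fact $V+k\cdot1$ is ``locally everything'' at $P$. Concretely: pick $f_0\in V$ nonconstant and apply the vector fields $\tau_1,\dots,\tau_s$ of (\ref{base}), working in the formal completion $R=k[[t_1,\dots,t_s]]$ via the embedding $\pi$. Using $\omega(\tau_j)=h(P)\frac{\del}{\del t_j}$ and the same monomial-lowering argument as in the proof of Proposition~\ref{local} (here again characteristic $0$ is used to ensure the relevant iterated derivatives of a monomial give a nonzero scalar), one produces from $f_0$ an element of $V$ whose image in $R$ has a prescribed nonzero lowest-degree term; in particular one gets an element of $V$ with nonzero constant term, i.e. nonvanishing at $P$. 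More generally, applying the $\tau_j$ repeatedly, one sees that the image of $V$ in $R/\m_0^{N}$ contains, for every $N$, everything that the image of $A$ does modulo $\m_0^{N}$ except possibly the degree-$0$ part — but since we already produced an element of $V$ with nonzero constant term, in fact $\pi(V)+\m_0^{N}=\pi(A)+\m_0^N$ for all $N$. The key mechanism: if $q\in\pi(V)$ then $\whp(\tau_j)q=\pi(h)\frac{\del}{\del t_j}q\in\pi(V)$, and since $\pi(h)$ is a unit in $R$ this lets us ``antidifferentiate up'' degree by degree, matching any target monomial.

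Next I would globalize, imitating the proof of Theorem~\ref{main}. The local analysis shows that for each $P\in X$ there is $v_P\in V$ with $v_P(P)\neq 0$. Consider the ideal $I_0\lhd A$ generated by all the $v_P$; it is contained in $V$ once we check that $V$ is closed under multiplication by $A$ — and this is the point that needs an argument, since $V$ is only assumed $\DD$-stable, not $A$-stable. To get $A$-stability on the relevant elements, I would use the mixed commutation relation and the local-ampleness Corollary: given $v\in V$ nonvanishing at $P$ and any $f\in A$, one writes $fv$ (up to adding a constant) as a combination of terms $\eta(v)$ and $v\cdot(\text{stuff})$ obtained by acting with suitable vector fields, exploiting that at a non-singular point one can realize multiplication operators infinitesimally; alternatively, one observes directly that $V+k\cdot 1$ is an $A$-submodule because $\mu(fv)=f\mu(v)+\mu(f)v$ shows the $A$-span of $V$ is $\DD$-stable, and then a Nullstellensatz argument as in Theorem~\ref{main} forces the $A$-span of $V$ to be all of $A$; finally one argues that the $A$-span being $A$, together with the local fact that $V$ already contains a unit-valued function near each point, pushes $V$ itself (not just its $A$-span) up to $A$. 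Concretely: since the $v_P$ generate the unit ideal, $1=\sum a_P v_P$ for finitely many $P$; it remains to see each $a_P v_P\in V$, which follows from the infinitesimal realization of multiplication. Then $1\in V$, and combined with $A$-stability of the span, $V=A$.

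The main obstacle is exactly this gap between $\DD$-stability and $A$-stability: a priori $V$ is only a Lie-module, so I cannot simply multiply elements of $V$ by functions and stay in $V$. The trick to overcome it is that near a non-singular point, and using the vector fields $\tau_j$ together with the bracket relation (\ref{bracket}), multiplication by a local parameter can be expressed through Lie-algebra actions up to lower-order corrections — essentially $[t_{j_0}\tau_i,\mu]$-type elements as already used in the Corollary to Proposition~\ref{local} — so that the $A$-module generated by a single nonconstant element of $V$ is squeezed into $V+k\cdot 1$. Once that is in hand, everything reduces to the Nullstellensatz packaging already developed for Theorem~\ref{main}. I would also take care, in the very first step, to handle the degenerate possibility that every element of $V$ is constant, in which case $V=k\cdot 1$ directly.
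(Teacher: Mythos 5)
Your overall architecture matches the paper's: produce, from a nonconstant element of $V$, functions in $V$ that do not vanish at each given point (via the local embedding into $k[[t_1,\dots,t_s]]$ and the derivations $\tau_j$ with lowest-order terms $h(P)\frac{\del}{\del t_j}$), then invoke the Nullstellensatz. The local step is fine. The gap sits exactly where you flag ``the point that needs an argument,'' and the arguments you offer there do not close it. The claim that $V+k\cdot 1$ is an $A$-submodule does not follow from the Leibniz rule: Leibniz shows that the $A$-span $AV$ is $\DD$-stable, which says nothing about $AV$ being contained in $V+k\cdot 1$; that containment is only true a posteriori, because the theorem forces $V=A$, so using it as an intermediate step is circular. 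Likewise ``multiplication by a local parameter can be expressed through Lie-algebra actions up to lower-order corrections'' is never made precise, and it is not clear how it could be: acting by $t_{j_0}\tau_i$ on $v\in V$ produces $t_{j_0}\tau_i(v)$, not $t_{j_0}v$, so there is no direct infinitesimal realization of multiplication on an arbitrary element of $V$.

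The one-line observation that closes the gap --- and is the paper's entire trick --- is that multiplication by functions is automatic on the \emph{image} of the $\DD$-action: $f\cdot\eta(v)=(f\eta)(v)$, and $f\eta\in\DD$ because $\DD$ is a left $A$-module. Consequently $N=U(\DD)\DD g$, the span of all $\eta_1\cdots\eta_m g$ with $m\ge 1$, is simultaneously a $\DD$-submodule contained in the submodule generated by $g$ and an ideal of $A$. Your local analysis already produces the non-vanishing elements $v_P$ inside $N$ (they arise by applying vector fields to $g$, and at least one application is needed to kill the constant term $g(P)$), so the Nullstellensatz applied to the ideal $N$ gives $1\in N\subseteq V$ directly, with no need to discuss $A$-stability of $V$ itself, of $V+k\cdot 1$, or of the individual products $a_Pv_P$. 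With this substitution your argument becomes the paper's proof.
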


\begin{proof}
It is sufficient to prove that a $\DD$-submodule $M \subset A$, generated by a non-constant function $g$ coincides with $A$. Since
$$ (f_1 \eta) f_2 = f_1 (\eta f_2)$$
for $f_1, f_2 \in A$, $\eta \in \DD$, we conclude that $\DD$-submodule 
$$N = U(\DD) \DD g \subset M$$
is an ideal in $A$. 
Fix an arbitrary point $P \in X$ and consider the embedding $\pi$ (\ref{embedA}),
of $A$ into the algebra $R$ of power series in the local parameters $t_1,\ldots, t_s$ at $P$, and $\whp$ (\ref{embedD}) of $\DD$ into $\Der R$. Note that $\pi (g)$ is a non-constant power series. Since $\whp(\DD)$ contains derivations with the lowest order terms $\frac{\partial}{\partial t_1}, \ldots, \frac{\partial}{\partial t_s}$, we conclude that
$N$ contains a function that does not vanish at $P$.
Finally, applying the Nullstellensatz, we get that $N = A$.
\end{proof}

\subsection{Lie algebra of vector fields on a sphere}
Next we consider example of the algebra of polynomial functions on an $(N-1)$-dimensional sphere identified with the factor algebra $A  = k[x_1, \ldots, x_N] / \left< x_1^2 + \ldots + x_N^2 - 1\right>$. 
The results of this section are probably well known but we include them for completeness. 

We can use Proposition \ref{threedef} to show that the Lie algebra $\DS$ of polynomial vector fields on  $\Sp^{N-1}$ is generated (as an $A$-module) by the vector fields
$\Delta_{ab} = x_b \frac{\partial} {\partial x_a} -  x_a \frac{\partial} {\partial x_b}$,
$1 \leq a \neq b \leq N$. 
The generators satisfy the following relation:
$$x_c \Delta_{ab} + x_a \Delta_{bc} + x_b \Delta_{ca} = 0.$$

Consider the action $\rho$ of $SL_N (k)$ (or $GL_N (k)$) on $\Sp^{N-1}$: 
\begin{equation}
\label{act}
\rho(g) v = \frac{gv}{\left| gv \right|},
\end{equation}
 for $g \in  SL_N (k)$, $v \in \R^3$ with $\left| v \right| = 1$. Because of the norm expression in the denominator, this action is not polynomial (it does not induce the action of   $SL_N (k)$ on the algebra $A$ of polynomial functions). Nonetheless the corresponding infinitesimal action of the Lie algebra $sl_N (k)$ is polynomial. This yields the following standard embedding of $sl_N (k)$ in the Lie algebra $\DS$ of polynomial vector fields on $\Sp^{N-1}$.

\begin{lem}
\label{embed}
Consider Lie algebra $sl_N (k)$ spanned by matrices $\{ E_{ab}, E_{aa} - E_{bb} \, | \, 
 1 \leq a \neq b \leq N \}$.
The embedding $\tau_N$ of $sl_N (k)$ into the Lie algebra $\DS$ of polynomial vector fields on $\Sp^{N-1}$ is given by
\begin{align}
\label{slna}
E_{ab} \mapsto &\sum_{p=1}^N x_b x_p \Delta_{ap},\\
\label{slnb}
E_{aa} - E_{bb} \mapsto &\sum_{p=1}^N x_a x_p \Delta_{ap} - x_b x_p \Delta_{bp}.
\end{align}

The embedding of the subalgebra $so_N (k)$ of skew-symmetric matrices is given by
$$E_{ab} - E_{ba} \mapsto \Delta_{ab}.$$
\end{lem}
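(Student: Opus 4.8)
The plan is to verify directly that the assignments (\ref{slna}), (\ref{slnb}) and $E_{ab}-E_{ba}\mapsto\Delta_{ab}$ define a Lie algebra homomorphism $sl_N(k)\to\DS$, and that it is injective. For the skew-symmetric part I would first check that $\Delta_{ab}\in\DS$: since $\Delta_{ab}(x_1^2+\cdots+x_N^2-1)=2x_bx_a-2x_ax_b=0$, the vector field $\Delta_{ab}$ annihilates the defining ideal, so by Proposition \ref{threedef} it lies in $\DS$. Next I would compute $[\Delta_{ab},\Delta_{cd}]$ using (\ref{bracket}) and match it against the relations of $so_N(k)$ in the basis $\{E_{ab}-E_{ba}\}$; this is the standard realization of $so_N$ as rotation vector fields and the bracket identities $[\Delta_{ab},\Delta_{bc}]=\Delta_{ac}$ (indices distinct), $[\Delta_{ab},\Delta_{cd}]=0$ (all indices distinct), drop out of a short calculation with the Leibniz rule.

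For the full $sl_N(k)$, I would set $D_{ab}:=\sum_{p=1}^N x_bx_p\Delta_{ap}$ and $H_{ab}:=\sum_p(x_ax_p\Delta_{ap}-x_bx_p\Delta_{bp})$ and check: (i) each $D_{ab}$, $H_{ab}$ lies in $\DS$, which is automatic since each is an $A$-linear combination of the $\Delta_{cp}\in\DS$ and $\DS$ is an $A$-submodule of $W_n$; (ii) the map respects the defining relations of $sl_N$, i.e. $[D_{ab},D_{cd}]=\delta_{bc}D_{ad}-\delta_{da}D_{cb}$ and the analogous brackets with the $H_{ab}$. Here the key computational input is the relation $x_c\Delta_{ab}+x_a\Delta_{bc}+x_b\Delta_{ca}=0$ together with $\sum_p x_p\Delta_{cp}=0$-type identities and the action $\Delta_{ap}(x_q)=x_p\delta_{aq}-x_a\delta_{pq}$; expanding $[D_{ab},D_{cd}]$ via (\ref{bracket}) and repeatedly using these, the cross terms collapse to give exactly the $E$-bracket. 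It is cleanest to derive this infinitesimally from the action $\rho$ in (\ref{act}): differentiating $\rho(\exp(tE_{ab}))v$ at $t=0$ and projecting onto the sphere produces precisely the vector field $\sum_p x_bx_p\Delta_{ap}$, so functoriality of this construction gives the homomorphism property for free once one checks the tangency/projection formula.

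Injectivity I would handle by evaluating at a suitable point: the images, specialized at a generic $v\in\Sp^{N-1}$, recover the infinitesimal $sl_N$-action on the sphere, which has trivial kernel because $SL_N$ acts on $\mathbb P^{N-1}$ (hence on $\Sp^{N-1}$ via (\ref{act})) with finite, in fact trivial, stabilizer generically; alternatively one can argue that a nonzero element of $sl_N$ mapping to $0$ would annihilate all coordinate restrictions $x_i|_{\Sp^{N-1}}$, and a direct look at (\ref{slna})–(\ref{slnb}) evaluated on the $x_j$ forces all matrix entries to vanish.

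The main obstacle is the bookkeeping in (ii): matching $[D_{ab},D_{cd}]$ and the $H$-brackets to the $sl_N$ structure constants requires careful use of the two relations among the $\Delta$'s and of the product rule, and the off-diagonal versus diagonal cases must be checked separately. I expect the skew-symmetric and injectivity parts to be short; the bulk of the proof is the $sl_N$-bracket verification, which is most painlessly done by the infinitesimal-action argument rather than by brute force with (\ref{bracket}).
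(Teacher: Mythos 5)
The paper offers no written proof of this lemma: it simply asserts that differentiating the action $\rho$ of (\ref{act}) at the identity produces the stated vector fields, which is precisely the ``infinitesimal-action'' route you identify as the cleanest. So your overall strategy coincides with the paper's implicit one, and the key computation does work: for $X=E_{ab}$ one gets $\frac{d}{dt}\big|_{t=0}\frac{e^{tX}v}{|e^{tX}v|}=Xv-\langle Xv,v\rangle v$, i.e.\ the vector field $x_b\frac{\del}{\del x_a}-x_ax_b\sum_p x_p\frac{\del}{\del x_p}$, which equals $\sum_p x_bx_p\Delta_{ap}$ modulo the relation $\sum_p x_p^2=1$, confirming (\ref{slna}); tangency to the sphere is automatic since each $\Delta_{ap}$ annihilates $x_1^2+\cdots+x_N^2-1$ and $\DS$ is an $A$-module.

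Three concrete points in your plan would, however, not survive contact with the computation. First, the sign: fundamental vector fields of a left action give a Lie algebra \emph{anti}-homomorphism. Explicitly $[\Delta_{ab},\Delta_{bc}]=-\Delta_{ac}$, whereas $[E_{ab}-E_{ba},E_{bc}-E_{cb}]=E_{ac}-E_{ca}$, so the relations you propose to verify, $[D_{ab},D_{cd}]=\delta_{bc}D_{ad}-\delta_{da}D_{cb}$, will come out with the opposite sign. The content of the lemma survives (the span is a Lie subalgebra isomorphic to $sl_N$, via $X\mapsto-\tau_N(X)$ or $X\mapsto\tau_N(X^T)$), but your verification as literally stated would fail, and the infinitesimal-action shortcut has exactly the same subtlety. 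Second, the identity $\sum_p x_p\Delta_{cp}=0$ is false: $\sum_p x_p\Delta_{cp}=\frac{\del}{\del x_c}-x_c\sum_q x_q\frac{\del}{\del x_q}$ in $\DS$, the tangential projection of $\frac{\del}{\del x_c}$, which is a nonzero vector field. Third, the injectivity argument via ``finite, in fact trivial, generic stabilizer'' is wrong: the stabilizer of a point of $\Sp^{N-1}$ is the maximal parabolic $P$ described in the paper, of dimension $N^2-N$. What injectivity actually requires is effectiveness of the action modulo scalars: $\tau_N(X)=0$ forces $Xv\in kv$ for all $v$, hence $X$ is scalar, hence $X=0$ in $sl_N$; alternatively, the kernel is a proper ideal of the simple algebra $sl_N$ (proper because $\Delta_{ab}\neq0$), hence zero. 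Your fallback argument by evaluating on the coordinate functions is essentially this and is fine.
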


The above action of the group $GL_N (k)$ on $\Sp^{N-1}$ is transitive, so $\Sp^{N-1}$
forms a single orbit. Consider a point $Q = (1, 0, \ldots, 0) \in \Sp^{N-1}$. 
The stabilizer of this point is a maximal parabolic subgroup $P$ with the Levi factor $GL_1 (k) \times GL_{N-1} (k)$.

We would like to exhibit a connection to the representation theory of locally compact groups. We will specialize $k = \R$ for this discussion. Consider a representation of
$GL_N (\R)$ induced from the trivial 1-dimensional representation of the parabolic subgroup $P$:
$$S = \Ind_P^{GL_N (\R)} \R = \left\{ f \in \CC^\infty (GL_N (\R) \, | \, f(gp) = f(g)
\hbox{\rm \ for all \ } g\in GL_N (\R), p \in P \right\} .$$
Representations of this type were studied in \cite{HL}. 
We point out that the module $S$ is isomorphic to the space of $\CC^\infty$-functions on $\Sp^{N-1}$ via the pull-back of the map
$GL_N (\R) \rightarrow \Sp^{N-1}$, $g \mapsto \frac{g Q} {\left| gQ \right|}$.
The algebra $A$ of polynomial functions is a subspace in $S$, which, by Lemma \ref{embed}, is invariant under the action of the Lie algebra $gl_N (\R)$ (the identity matrix acts trivially, so the action reduces to the subalgebra $sl_N (\R)$).

Observe that the Lie subalgebra $so_N (\R)$ acts on $A$ in a locally finite-dimensional way. This follows from the last claim of Lemma \ref{embed}, as the operators $\Delta_{ab}$ do not increase the degree of polynomials, hence every vector in $A$ belongs to a finite-dimensional $so_N (\R)$-invariant subspace. It then follows that 
$A$ is a direct sum of finite-dimensional irreducible $so_N (\R)$-modules and hence the action of $so_N (\R)$ integrates to the action of the group $SO_N (R)$. The latter statement also follows from the fact that the restriction of (\ref{act}) to $SO_N$ is polynomial.

Therefore, the algebra $A$ of polynomial functions on $\Sp^{N-1}$ has a structure of an 
$(sl_N, SO_N)$-module and it is a dense subspace in the induced module $S$.  

Since the defining relation of the sphere
$x_1^2 + \ldots + x_N^2 = 1$ is not homogenous in degree, but preserves the parity of the degree, $\Z-grading$ by degree on the space of polynomials induces $\Z_2$-grading on $A$, $A = A_{ev} \oplus A_{odd}$. We note that each of these subspaces is invariant under the action of $sl_N$. To simplify notations, we will consider only $A_{ev}$. The case of $A_{odd}$ is analogous.

Let us recall the structure of $A$ as a module for $so(N)$ (we refer to \cite{Vi}, Chapter IX for details). Here
we assume that $N \geq 3$, as $N=2$ case is easier, but slightly different. The space $A_{ev}$ has an increasing filtration by subspaces $\T_{2k}$, which are spanned by monomials of even degrees not exceeding $2k$. Each subspace $\T_{2k}$ is invariant under
$so_N$ and can be identified as a space of homogeneous polynomials of degree $2k$. In this interpretation the embedding
$\T_{2k} \hookrightarrow \T_{2k + 2}$ is given via multiplication by $r^2 = x_1^2 + \ldots + x_N^2$. The space $\T_{2k}$ decomposes 
in a direct sum of irreducible $so(N)$-modules as
\begin{equation*}
\T_{2k} = \Ha_{2k} \oplus r^2 \Ha_{2k - 2} \oplus \ldots \oplus r^{2k} \Ha_0,
\end{equation*} 
where $\Ha_\ell$ is the space of harmonic homogenious polynomials of degree $\ell$, that is polynomials satisfying Laplace equation,
\begin{equation*}
\Delta h = \frac{\del^2 h}{\del x_1^2} + \ldots + \frac{\del^2 h}{\del x_N^2} = 0 . 
\end{equation*}
Since $r^2 = 1$ as a function on a sphere, this gives a decomposition of $A_{ev}$ in a direct sum of irreducible $so(N)$-modules
\begin{equation}
\label{deco}
A_{ev} = \mathop\oplus\limits_{k=0}^\infty \Ha_{2k} .
\end{equation}
Projection of $so_N$-modules $\T_{\ell} \rightarrow \Ha_\ell$ is given by the operator
\begin{equation}
\label{proj}
P = \sum\limits_{k=0}^{\left[ \frac{\ell}{2} \right]} \frac{(-1)^k}{2^k k!} \frac{ (N + 2\ell - 2k - 4)!!} {(N + 2\ell - 4)!!}
r^{2k} \Delta^k .
\end{equation}

It turns out that the action of $sl_N$ spreads the components $\Ha_\ell$ in a localized way.

\begin{lem}
$sl_N (\Ha_\ell) \subset \Ha_{\ell+2} \oplus \Ha_\ell \oplus \Ha_{\ell - 2}.$
\end{lem}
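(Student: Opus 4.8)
The plan is to work with the explicit vector fields from Lemma~\ref{embed}. The generators of $sl_N$, as vector fields on $\Sp^{N-1}$, all have the form $\xi = \sum_{p} (\text{quadratic monomial}) \, \Delta_{ap}$ (or sums of two such), where each $\Delta_{ab} = x_b \partial_{x_a} - x_a \partial_{x_b}$. Acting on a homogeneous polynomial $h \in \Ha_\ell$ viewed inside $\T_\ell$ (homogeneous of degree $\ell$), an operator of the form $x_b x_p \Delta_{ap}$ raises the polynomial degree by $2$: $x_b x_p \Delta_{ap} h$ is homogeneous of degree $\ell+2$. So the first step is the observation that $sl_N(\T_\ell) \subset \T_{\ell+2}$ at the level of honest homogeneous polynomials, before imposing $r^2 = 1$.

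Next I would decompose $\T_{\ell+2}$ into $so_N$-isotypic pieces $\Ha_{\ell+2} \oplus r^2 \Ha_\ell \oplus r^4 \Ha_{\ell-2} \oplus \cdots \oplus r^{\ell+2}\Ha_{?}$ and apply the projection operator $P$ of (\ref{proj}) to $\xi h$. Since in $A_{ev}$ (or $A_{odd}$) we have $r^2 = 1$, the component $r^{2k}\Ha_{\ell+2-2k}$ of $\xi h$ becomes, as a function on the sphere, its harmonic representative in $\Ha_{\ell+2-2k}$. Thus the content of the lemma is that when we expand $\xi h \in \T_{\ell+2}$ into harmonics, only the three top pieces $\Ha_{\ell+2}$, $r^2\Ha_\ell$, $r^4\Ha_{\ell-2}$ can be nonzero; all lower harmonic components vanish. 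Equivalently, $\Delta^3(\xi h) = 0$ for every generator $\xi$ of $sl_N$ and every $h\in\Ha_\ell$, since $\Delta$ kills $\Ha_{\ell+2}$, $\Delta^2$ kills $r^2\Ha_\ell$, and $\Delta^3$ kills $r^4\Ha_{\ell-2}$, while $\Delta$ acts injectively on the remaining summands $r^{2k}\Ha_{\ell+2-2k}$ with $k\geq 3$ (for $N\geq 3$).

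So the key computational step is to verify $\Delta^3(\xi h) = 0$ for $\xi = \sum_p x_b x_p \Delta_{ap}$ and $\Delta h = 0$. I would compute the commutator $[\Delta, \xi]$ acting on degree-$\ell$ polynomials: since $\xi$ is a degree-$2$-raising operator built from the rotation fields $\Delta_{ap}$ (each of which commutes with $\Delta$, being the infinitesimal rotations that preserve the Laplacian), the commutator $[\Delta, \xi]$ is a first-order-in-$\Delta$ correction — concretely $[\Delta, x_bx_p\Delta_{ap}] = (\partial_b\,\text{-stuff})$, linear in $\partial$ and in $x$, so $[\Delta,\xi]$ is again a degree-$2$-raising operator plus lower-order terms but crucially $[\Delta,[\Delta,\xi]]$ is a bounded-order operator and $[\Delta,[\Delta,[\Delta,\xi]]] = 0$ because $\xi$ is only quadratic in $x$. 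Then $\Delta^3 \xi h = \sum \binom{3}{i}(\ad\Delta)^i(\xi)\,\Delta^{3-i} h$ and every term has either a factor $\Delta^{j}h=0$ with $j\geq 1$ or a factor $(\ad\Delta)^3\xi = 0$.

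The main obstacle I anticipate is bookkeeping: carefully computing $\ad\Delta$ and $(\ad\Delta)^2$ on the specific operators $\sum_p x_bx_p\Delta_{ap}$ and $\sum_p(x_ax_p\Delta_{ap} - x_bx_p\Delta_{bp})$, and making sure the claim $(\ad\Delta)^3\xi = 0$ really holds (it should, on degree grounds: $\ad\Delta$ lowers the polynomial-coefficient degree of a differential operator by at most... one needs to track that $\xi$ has coefficients of degree $3$ as a first-order operator, and each bracket with $\Delta$ drops the net degree, bottoming out). An alternative cleaner route avoiding explicit $\ad$ computations: use that $\xi \cdot r^2 = 0$ as a function on the sphere is automatic but as honest polynomials $\Delta_{ap}(r^2)=0$, so $\xi$ actually annihilates $r^2$ identically in $k[x_1,\dots,x_N]$; hence $\xi$ descends to a well-defined operator on $k[x]/(r^2-1) = A$, and on $A$ one has the finite harmonic decomposition (\ref{deco}). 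Then the degree-raising-by-$2$ property on $\T_\ell \subset \T_{\ell+2}$, combined with $P$ of (\ref{proj}), forces the image into $\Ha_{\ell+2}\oplus\Ha_\ell\oplus\Ha_{\ell-2}$ provided one shows the $\Ha_{\ell-4}$ and lower components vanish — which is exactly the $\Delta^3$ computation, so this reformulation does not eliminate the core calculation but organizes it. I would present the $\ad\Delta$ computation as the main lemma-within-the-proof and note that characteristic $0$ (or at least invertibility of the double-factorial coefficients in (\ref{proj}) for $N\geq 3$) is used to guarantee $P$ is well-defined.
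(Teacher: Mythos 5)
Your overall route is exactly the paper's: observe $Xh\in \T_{\ell+2}$, prove $\Delta^3(Xh)=0$, and then read off from the projection formula (\ref{proj}) (equivalently, from injectivity of $\Delta^3$ on $r^{2k}\Ha_m$ for $k\ge 3$, $N\ge 3$) that only the components $\Ha_{\ell+2},\Ha_\ell,\Ha_{\ell-2}$ survive. The paper simply asserts that $\Delta^3 g=0$ is ``straightforward to check,'' so your contribution is the proposed mechanism for that check, and there you have one genuine error: the claim $(\ad\Delta)^3\xi=0$ is false. Writing $\xi=\sum_p x_bx_p\Delta_{ap}=x_br^2\partial_a-x_ax_bE$ with $E=\sum_px_p\partial_p$, and using $[\Delta,E]=2\Delta$, one computes for instance $(\ad\Delta)^3(x_ax_bE)=48\,\partial_a\partial_b\Delta\neq 0$; the $x_br^2\partial_a$ piece likewise contributes a nonzero term. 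Your degree heuristic only shows $(\ad\Delta)^3\xi$ is a constant-coefficient operator of order $4$, not that it vanishes.

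The argument is easily repaired, and you half-anticipated the issue by flagging this step as the one needing verification: what is true is that $(\ad\Delta)^3\xi$ factors as $C\circ\Delta$ for a constant-coefficient operator $C$, hence annihilates harmonic polynomials. With that correction, the expansion
\begin{equation*}
\Delta^3\xi h=\sum_{i=0}^{3}\binom{3}{i}\,(\ad\Delta)^i(\xi)\,\Delta^{3-i}h
\end{equation*}
kills every term: for $i<3$ because $\Delta^{3-i}h=0$, and for $i=3$ because $(\ad\Delta)^3(\xi)h=C\Delta h=0$. (Alternatively, a direct computation using $[\Delta,r^2]=4E+2N$ and $[\Delta,x_a]=2\partial_a$ gives $\Delta^2(x_br^2\partial_a h)$ and $\Delta^2(x_ax_bh)$ as constant multiples of $\partial_a\partial_b h$, after which one more $\Delta$ vanishes.) So: same approach as the paper, correct structure, but the stated operator identity must be weakened to ``$(\ad\Delta)^3\xi$ annihilates $\Ha_\ell$'' for the proof to go through.
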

\begin{proof}
Let $h \in \Ha_\ell$. Then it follows from (\ref{slna})-(\ref{slnb}) that $g = X h \in \T_{\ell + 2}$ for any $X \in sl_N$. Moreover, it is straightforward to check that when written as a homogeneous polynomial of degree $\ell + 2$, $g$ satisfies the equation
$\Delta^3 g = 0$. It then follows from the projection formula (\ref{proj}) that $g$ projects only to 3 components,
$\Ha_{\ell+2}$, $\Ha_\ell$ and $\Ha_{\ell - 2}$.  
\end{proof}

Using this lemma, we can prove the following

\begin{prp}\label{prop-sphere}
Let $N \geq 3$.
Let $A$ be the algebra of polynomial functions on $\Sp^{N-1}$. As an $sl_N$-module,
$A$ decomposes into a direct sum of two submodules $A = A_{odd}  \oplus A_{ev}$,
where $A_{odd}$ (resp. $A_{ev}$) is a subspace in $A$ spanned by the monomials of odd (resp. even) degree. The space $A_{odd}$ is an irreducible $sl_N$-module, while
$A_{ev}$ has a unique proper submodule which is 1-dimensional and consists of constant   
functions.
\end{prp}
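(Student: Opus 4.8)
The plan is to combine the $so_N$-decomposition (\ref{deco}) of $A$ with explicit formulas describing how the embedded copy of $sl_N$ moves between harmonic components, and then to run a ``minimal index'' argument. First I would observe that, since the vector fields $\Delta_{ab}$ preserve polynomial degree, the $so_N$-action on $A$ is locally finite and therefore completely reducible; combining (\ref{deco}) with its odd analogue gives $A=\bigoplus_{\ell\ge0}\Ha_\ell$, a decomposition into pairwise non-isomorphic irreducible $so_N$-modules (\cite{Vi}, Ch.~IX, using $N\ge3$). Hence every $so_N$-submodule of $A$ — in particular every $sl_N$-submodule $M$ — is of the form $M=\bigoplus_{\ell\in\Sigma}\Ha_\ell$ for some $\Sigma\subseteq\Z_{\ge0}$. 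Since $\Ha_\ell$ lies in $A_{ev}$ or in $A_{odd}$ according to the parity of $\ell$, one has $M=(M\cap A_{ev})\oplus(M\cap A_{odd})$ with both summands $sl_N$-stable, so it suffices to treat nonzero submodules contained in $A_{ev}$ or in $A_{odd}$.

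Next I would pin down how $sl_N$ changes the index $\ell$. Lifting $h\in\Ha_\ell$ to a homogeneous polynomial and using the relation $r^2=1$ on $\Sp^{N-1}$, formula (\ref{slna}) yields, for $a\ne b$,
\begin{equation*}
E_{ab}\cdot h = r^2\, x_b\,\frac{\del h}{\del x_a} - \ell\, x_a x_b\, h \in \T_{\ell+2},
\end{equation*}
and a short computation with $\Delta(r^2 f)=r^2\Delta f+(4\ell+2N)f$ (for $f$ homogeneous of degree $\ell$) and $\Delta h=0$ gives
\begin{equation*}
\Delta^2\big(E_{ab}\cdot h\big) = 8(\ell+N-2)\,\frac{\del^2 h}{\del x_a\,\del x_b}.
\end{equation*}
By the Lemma preceding the proposition, $sl_N\cdot\Ha_\ell\subseteq\Ha_{\ell+2}\oplus\Ha_\ell\oplus\Ha_{\ell-2}$, and from the two displays together with the projection formula (\ref{proj}) I would record three facts, valid for $N\ge3$ and $\mathrm{char}\,k=0$: (Z) $sl_N$ annihilates $\Ha_0$ (derivations kill constants); (R) for every $\ell\ge1$ the $\Ha_{\ell+2}$-component of $sl_N\cdot\Ha_\ell$ is nonzero — choosing $h=(x_1+\sqrt{-1}\,x_2)^\ell$ (harmonic, in the variables $x_1,x_2$ only) and an index $a\ge3$ makes $E_{a1}\cdot h=-\ell\,x_1x_a h$, which is not divisible by the irreducible polynomial $r^2$, so its top harmonic part survives; (L) for every $\ell\ge2$ the $\Ha_{\ell-2}$-component of $sl_N\cdot\Ha_\ell$ is nonzero — for the same $h$ the mixed derivative $\del^2 h/\del x_1\del x_2$ is nonzero, hence $\Delta^2(E_{12}\cdot h)\ne0$, and since $\Delta^3(E_{12}\cdot h)=0$ (as in the proof of that Lemma) the $\Ha_{\ell-2}$-component of $E_{12}\cdot h$ is a nonzero scalar multiple of $\Delta^2(E_{12}\cdot h)$.

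Finally I would propagate from the smallest index. Write a nonzero submodule as $M=\bigoplus_{\ell\in\Sigma}\Ha_\ell$. If $M\subseteq A_{odd}$, put $\ell_0=\min\Sigma$: fact (L) rules out $\ell_0\ge3$ (it would force $\ell_0-2\in\Sigma$), so $\ell_0=1$, and then (R) forces every odd $\ell$ into $\Sigma$, giving $M=A_{odd}$; thus $A_{odd}$ is irreducible. If $M\subseteq A_{ev}$ and $\Sigma\ne\{0\}$, let $\ell_1$ be the least element of $\Sigma$ with $\ell_1\ge2$; by (L), $\ell_1-2\in\Sigma$, and minimality of $\ell_1$ among elements $\ge2$ forces $\ell_1=2$, so $0,2\in\Sigma$, and then (R) forces every even $\ell$ into $\Sigma$, giving $M=A_{ev}$. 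Hence the only proper nonzero $sl_N$-submodule of $A_{ev}$ is $\Sigma=\{0\}$, i.e. $\Ha_0$, which is $1$-dimensional and consists of the constants by (Z). This is the assertion of the proposition.

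I expect the main obstacle to be fact (L) in the middle step: one must check that the grading-lowering part of the $sl_N$-action is genuinely nonzero while having at one's disposal only the generators $E_{ab}$ ($a\ne b$) and $E_{aa}-E_{bb}$ of $sl_N$ (not the individual $E_{aa}$), which is exactly why a harmonic test polynomial with nonzero \emph{mixed} second partial derivative is needed, and the coefficient $8(\ell+N-2)$ is where the hypotheses $N\ge3$ and $\mathrm{char}\,k=0$ enter. Once the three facts (Z), (R), (L) are in hand, Steps 1 and 3 are routine bookkeeping with the $so_N$-decomposition already recorded above.
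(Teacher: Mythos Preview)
Your proof is correct and follows the same overall strategy as the paper's (sketched) proof: use the multiplicity-free $so_N$-decomposition $A=\bigoplus_\ell \Ha_\ell$ to reduce to showing that the $sl_N$-action genuinely raises and lowers the harmonic index by $2$, then propagate from the minimal index. The difference is only in the choice of test vector and in the level of detail. The paper takes $h$ to be the harmonic projection of $x_a x_b x_c^{\ell-2}$ (with $a,b,c$ distinct) and asserts, without writing out the computation, that both the $\Ha_{\ell+2}$- and $\Ha_{\ell-2}$-components of $E_{ab}h$ are nonzero. You instead use the explicit harmonic polynomial $h=(x_1+\sqrt{-1}\,x_2)^\ell$, apply $E_{a1}$ with $a\ge 3$ for the raising step, and $E_{12}$ for the lowering step, and you justify the lowering step via the clean identity $\Delta^2(E_{ab}h)=8(\ell+N-2)\,\partial_a\partial_b h$, which the paper does not record. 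Your argument for the raising step (nondivisibility by the irreducible quadric $r^2$, using $N\ge 3$) is likewise more explicit than the paper's. In short, the two proofs are the same in spirit; yours is a fully worked-out version of the paper's sketch with a slightly different and somewhat more transparent choice of witness polynomial.
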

\begin{proof}
Let us give a sketch of a proof. Since in the decomposition (\ref{deco}) all components are pairwise non-isomorphic, a non-zero 
submodule in $A_{ev}$ (resp. $A_{odd}$) will contain one of the subspaces $\Ha_{\ell}$. All we have to show that we can use $sl_N$ action applied to
$\Ha_{\ell}$ with $\ell > 0$, to generate $\Ha_{\ell+2}$ and $\Ha_{\ell-2}$. This can be established using an explicit calculation.
If we take as $h$, for example, a projection of $x_a x_b x_c^{\ell -2}$ to $\Ha_\ell$ (with $a,b,c$ distinct), and set 
$g = E_{ab} h$, then the projections of $g$ to $\Ha_{\ell + 2}$ and for $\ell > 1$ to $\Ha_{\ell -2}$ will be non-zero. Then any component $\Ha_{2k}$ (resp. $\Ha_{2k-1}$) with $k > 0$ generates $A_{ev}$ (resp. $A_{odd}$), and the claim of the proposition follows.     
\end{proof}

\

\section{Vector fields on hyperelliptic curves}

Consider a hyperelliptic curve $\HH$ given by equation $y^2  = 2 h(x)$, where $h$ is a monic polynomial of odd degree $2m+1 \geq 3$. Denote by $I$ the ideal in $k[x,y]$ generated by $y^2 - 2h(x)$. The curve $\HH$ is non-singular if and only if 
\GCD$\left( h(x), h^\prime(x) \right) = 1$.

The algebra of polynomial functions on $\HH$ is 
$A = k[x,y] / \left< y^2 - 2h(x) \right>$. As a vector space,
$$ A \cong k[x] \oplus y k[x].$$

Let $\Dh$ be the Lie algebra of polynomial vector fields on $\HH$ identified with the derivation algebra on $A$,
$$\Dh = \Der_k (A).$$
Lie algebra $\Dh$ may be realized as a sunquotient of $W_2 = \Der k[x,y]$:
$$\Dh\cong \left\{ \mu \in W_2 \, \big| \, \mu (I) \subset I \right\} 
/  \left\{ \mu \in W_2 \, \big| \, \mu (k[x,y]) \subset I \right\} .$$

Let $\mu = r(x,y) \dx + s(x,y) \dy \in W_2$. 
Note that $\mu$ satisfies $\mu (I) \subset I$ if and only if $\mu(y^2 - 2h(x)) \in I$
or equivalently $2y s - h^\prime (x) r = 0$ in $A$.

As a left $A$-module, $\Dh$ is a submodule in $A \dx \oplus A \dy$:
$$\Dh = \left\{ r \dx + s \dy \, \big| \, r,s \in A, \, ys - h^\prime(x) r = 0 \right\}.$$ 

\begin{prp}
\label{hypell}
(a) Let $\HH$ be a non-singular hyperelliptic curve $y^2 = 2h(x)$, 
\GCD$\left( h(x), h^\prime(x) \right) = 1$. Then the Lie algebra of vector fields $\Dh$
is a free $A$-module of rank $1$ generated by $\tau = y \dx + h^\prime(x) \dy$.

(b) If  \GCD$\left( h(x), h^\prime(x) \right) = d(x) \neq 1$ then $A$-module $\Dh$ is not free and has two generators,  $\tau = y \dx + h^\prime(x) \dy$ and 
$\mu = 2 \frac{h(x)}{d(x)} \dx + y \frac{h^\prime (x)}{d(x)} \dy$ with a relation 
$y \tau = d(x) \mu$.
\end{prp}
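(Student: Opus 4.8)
The plan is to analyze the defining equation $ys - h'(x)r = 0$ directly over $A = k[x] \oplus yk[x]$, writing $r = r_0(x) + yr_1(x)$ and $s = s_0(x) + ys_1(x)$ and using the multiplication rule $y^2 = 2h(x)$. Substituting into $ys - h'r = 0$ and separating the ``even'' part (polynomials in $x$) from the ``odd'' part (multiples of $y$), the single relation becomes the pair of polynomial identities
\begin{equation*}
2h(x)s_1(x) = h'(x)r_0(x), \qquad s_0(x) = h'(x)r_1(x).
\end{equation*}
The second identity is free: it simply determines $s_0$ once $r_1$ is chosen. So a vector field in $\Dh$ is completely specified by $r_0, r_1, s_1 \in k[x]$ subject only to $2h s_1 = h' r_0$, and then $\mu = (r_0 + yr_1)\dx + (h'r_1 + ys_1)\dy = r_1(y\dx + h'\dy) + (r_0\dx + ys_1\dy)$. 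Thus $\tau = y\dx + h'\dy$ always lies in $\Dh$, and it suffices to understand the module of pairs $(r_0, s_1)$ with $2hs_1 = h'r_0$; call a corresponding vector field $\nu = r_0\dx + ys_1\dy$.

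First I would treat part (a): if $\gcd(h, h') = 1$, then $h \mid r_0$ forces (from $2hs_1 = h'r_0$) that $h'r_0/h$ is a polynomial divisible by $h'$, hence $h \mid r_0$ exactly — more carefully, $h' r_0 = 2 h s_1$ with $\gcd(h,h')=1$ gives $h \mid r_0$, say $r_0 = h \cdot c(x)$, and then $s_1 = h' c / 2$. So $\nu = c(x)(h\dx + \tfrac12 y h' \dy) = \tfrac{c}{2}(2h\dx + yh'\dy)$. Now observe $2h\dx + yh'\dy = y\tau$ in the $A$-module $A\dx \oplus A\dy$, since $y\tau = y^2\dx + yh'\dy = 2h\dx + yh'\dy$. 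Hence every $\nu$ is an $A$-multiple of $\tau$, and combined with the $r_1\tau$ term we get $\mu = (r_1 + \tfrac{c}{2}y)\tau$, i.e. $\Dh = A\tau$. Freeness is immediate: $A$ has no zero divisors and $\tau \neq 0$, so $f\tau = 0 \Rightarrow f = 0$.

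For part (b), write $d = \gcd(h,h')$, $h = d\tilde h$, $h' = d\tilde h'$ with $\gcd(\tilde h, \tilde h') = 1$. The relation $2hs_1 = h'r_0$ becomes $2\tilde h s_1 = \tilde h' r_0$, so as before $\tilde h \mid r_0$, $r_0 = \tilde h\, c(x)$, $s_1 = \tilde h' c / 2$, giving $\nu = \tfrac{c}{2}(2\tilde h\dx + y\tilde h'\dy) = \tfrac{c}{2}\mu$ with $\mu := 2(h/d)\dx + y(h'/d)\dy \in \Dh$. So $\Dh$ is generated over $A$ by $\tau$ and $\mu$; the relation $y\tau = d(x)\mu$ holds because $y\tau = 2h\dx + yh'\dy = d(x)\bigl(2(h/d)\dx + y(h'/d)\dy\bigr) = d\mu$. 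Finally I must check $\Dh$ is not free: a rank-one free module would be generated by a single element, but specializing at a singular point $P$ (a common root of $h$ and $h'$, which exists since $d \neq 1$ and $k$ is algebraically closed) both $\tau$ and $\mu$ vanish — indeed $\tau(P) = 0$ since $y = 0$ and $h'(P)=0$ there — so every element of $\Dh$ vanishes at $P$, contradicting Corollary (``ample at every non-singular point'') being violated but more directly contradicting generation by a single non-vanishing field; alternatively, $\tau$ and $\mu$ are $A$-linearly independent fractionally (rank $1$ over $\operatorname{Frac}A$) yet neither divides the other in $\Dh$, and the relation $y\tau = d\mu$ with $y \nmid d$ and $d \nmid y$ obstructs reduction to one generator.

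The main obstacle I anticipate is the non-freeness claim in (b): showing rigorously that no single $f_0\tau + g_0\mu$ generates. I expect the cleanest route is to pass to $\Dh / \m_P\Dh$ for a singular point $P$ and show this $A/\m_P = k$-vector space has dimension $\geq 2$ (using that $\tau, \mu$ both lie in $\m_P\cdot(A\dx\oplus A\dy)$ is false — rather that their images are linearly independent because the relation $y\tau = d\mu$ becomes $0 = 0$ at $P$ and imposes no constraint), which rules out a single generator. Everything else is the routine bookkeeping with the even/odd decomposition of $A$ sketched above.
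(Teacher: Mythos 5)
The paper gives no proof of this proposition (it is explicitly left to the reader), so there is nothing to compare routes against; your even/odd decomposition $r=r_0+yr_1$, $s=s_0+ys_1$ is the natural computation, and it is carried out correctly for part (a) and for the generation and the relation $y\tau=d\mu$ in part (b). The one genuine gap is the non-freeness claim in (b), which you yourself flag but do not close. Your first argument for it does not work as stated: the fact that every element of $\Dh$ vanishes at a singular point $P$ is no obstruction to freeness --- a free rank-one module $A\sigma$ with $\sigma(P)=0$ has exactly this property. Likewise, ``neither generator divides the other'' rules out $\{\tau,\mu\}$ being a basis but not the existence of some other single generator.

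Your second, deferred strategy is the right one, and your own computation already closes it in two lines, so you should execute it. You have shown $\Dh=k[x]\tau\oplus k[x]\mu$ as a $k[x]$-module, and one checks $y\tau=d\mu$ and $y\mu=2(h/d)\tau$. Hence for a singular point $P=(x_P,0)$ (a common root of $h,h'$, with $\m_P=(x-x_P,y)$),
\begin{equation*}
\m_P\Dh=(x-x_P)k[x]\tau+(x-x_P)k[x]\mu+k[x]\,d\,\mu+k[x]\,\tfrac{2h}{d}\,\tau .
\end{equation*}
Since $x_P$ is a root of $h$ of some multiplicity $e\ge 2$ and of $h'$ of multiplicity exactly $e-1$ (characteristic $0$), both $d$ and $h/d$ are divisible by $x-x_P$, so $\Dh/\m_P\Dh\cong k[x]/(x-x_P)\oplus k[x]/(x-x_P)$ is $2$-dimensional. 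As you note, $\Dh\otimes_A\operatorname{Frac}(A)$ is $1$-dimensional, so a free $\Dh$ would be free of rank $1$ and would have $1$-dimensional fibers everywhere; this contradiction finishes (b). (Incidentally, the divisibility $(x-x_P)\mid h/d$ is also what justifies your unproved assertion that $\mu(P)=0$.) With this paragraph added and the ``non-vanishing generator'' argument deleted, the proof is complete.
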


 We leave the proof for the reader.


Let us define the degree of a monomial in $A = k[x] \oplus y k[x]$ by
$$\deg (x^k) = 2k, \hbox{\hspace{10pt}} \deg (x^k y) = 2k + 2m + 1.$$
This yields an increasing filtration
\begin{equation}
\label{filtr}
A_0 \subset A_1 \subset A_2 \subset \ldots, 
\end{equation}
where $A_s$ is the space spanned by all monomials of degree less or equal to $s$.
Note that $A_s A_k \subset A_{s+k}$. The degree that we introduced does not define a 
$\Z$-grading since the relation $y^2 = 2 h(x)$ is not homogeneous.

For $f \in A$ define the leading term, LT$(f)$, as the term of the highest degree in the  expansion of $f$ in the basis $\{ x^k, x^k y \, | \, k \in \N \}$. 
We define the degree of $f \in A$ as the degree of its (unique) leading monomial.
Then $\deg (fg) = \deg(f) + \deg(g)$. This follows from the fact that 
the degree is compatible with the relation in $A$, $\deg h(x) = 2 \deg (y)$. 

Using the filtration (\ref{filtr}) we can construct the associated graded algebra
$$\gr A = A_0 \oplus A_1/A_0 \oplus A_2 /A_1 \oplus \ldots .$$
It is easy to see that each graded component of $\gr A$ has dimension at most $1$ and 
$$\gr A \cong k[x,y] / \left< y^2 - 2 x^{2m+1} \right> .$$
There is an embedding 
$$\psi : \, k[x,y] / \left< y^2 - 2 x^{2m+1} \right> \hookrightarrow k[t]$$
given by $\psi (x) = 2 t^2, \psi (y) = 2^{m+1} t^{2m+1}$. Hence we can identify $\gr A$ as a subalgebra in $k[t]$ generated by $t^2$ and $t^{2m+1}$.

Taking the leading term may be viewed as a map
$$\hbox{\rm LT} :\, A \rightarrow \gr A .$$
This map is not additive, but preserves the product: LT$(fg) = $LT$(f)$ LT$(g)$, where
the product in the right hand side is taken in $\gr A$.  

Assume now that $\HH$ is smooth, so that $\Dh = A \tau$.
It is easy to see that for any monomial $u \in A$, $u \neq 1$, we have $\tau (u) \neq 0$ and $\deg \tau(u) = deg (u) + 2m -1$. Hence for any non-zero vector field $g \tau \in \Dh$ and
any non-constant $f \in A$ we have
$$ \deg \left( g \tau (f) \right) = \deg (f) + \deg (g) + 2m -1 .$$
We assign degree to non-zero elements of $\Dh$ by
$$\deg (g \tau) = \deg (g) + 2m -1.$$
Let $\Dh^k$ be the subspace in $\Dh$ of elements of degree less or equal to $k$.
The we get an increasing filtration
$$ (0) \subset \Dh^{2m-1} \subset \Dh^{2m} \subset \Dh^{2m+1} \ldots .$$
Note that $\eta \in \Dh^k$ if and only if $\mu (A_s) \subset A_{s+k}$ for all $s$.
This implies that $[\Dh^k, \Dh^r ] \subset \Dh^{k+r}$. Consider the associated graded 
Lie algebra 
$$\gr \Dh = \Dh^{2m-1} \oplus \Dh^{2m}/\Dh^{2m-1} \oplus  \Dh^{2m+1}/\Dh^{2m} \oplus \ldots .$$

The proof of the following Lemma is direct, and we omit it.

\begin{lem}
Lie algebra $\gr \Dh$ acts on $\gr A$ by derivations. We can identify $\gr \Dh$ with
a $\gr A$-submodule in $\Der k[t]$ generated by $t^{2m} \frac{\partial}{\partial t}$.
\end{lem}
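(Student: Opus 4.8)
The plan is to make the statement precise by first describing the derivation action of $\Dh$ on $A$ explicitly, then showing this descends to the associated graded objects, and finally identifying the image. Recall $\Dh = A\tau$ with $\tau = y\dx + h'(x)\dy$, and that $\tau(u)\neq 0$ with $\deg\tau(u) = \deg(u) + 2m-1$ for every monomial $u\neq 1$. First I would observe that if $\eta \in \Dh^k$ and $f \in A_s$, then by the filtration-compatibility established above, $\eta(f) \in A_{s+k}$; hence the pairing $\Dh \times A \to A$ restricts to $\Dh^k \times A_s \to A_{s+k}$ and therefore induces a well-defined bilinear map on the associated graded pieces, $\gr\Dh \times \gr A \to \gr A$. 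One checks directly that this induced map satisfies the Leibniz rule: for homogeneous representatives, $\mathrm{LT}(\eta(fg)) = \mathrm{LT}(\eta(f)g + f\eta(g))$, and since $\deg(\eta(f)g) = \deg(f\eta(g)) = \deg(f)+\deg(g)+\deg(\eta)$ there is no cancellation of leading terms, so $\mathrm{LT}(\eta(fg)) = \mathrm{LT}(\eta(f))\mathrm{LT}(g) + \mathrm{LT}(f)\mathrm{LT}(\eta(g))$ in $\gr A$. This gives the first assertion, that $\gr\Dh$ acts on $\gr A$ by derivations.

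Next I would compute the leading term of $\tau$ acting on the generators. Using the identification $\gr A \cong k[t^2,t^{2m+1}] \subset k[t]$ via $\psi(x) = 2t^2$, $\psi(y) = 2^{m+1}t^{2m+1}$, we track what $\tau$ does on leading terms: on $x$, $\tau(x) = y$, whose leading term corresponds to $t^{2m+1}$; on $y$, $\tau(y) = h'(x)$ which has degree $2\cdot(2m) = 4m$... more carefully, $\deg(h'(x)) = 2(2m) = 4m$ while $\deg y = 2m+1$, consistent with $\deg\tau = 2m-1$. Translating to $k[t]$: a derivation $D$ of $k[t^2,t^{2m+1}]$ of degree $2m-1$ (in the $t$-grading) with $D(t^2) \sim t^{2m+1}$ and $D(t^{2m+1}) \sim t^{4m}$ is exactly a scalar multiple of $t^{2m}\frac{\partial}{\partial t}$ restricted to the subalgebra: indeed $t^{2m}\frac{\partial}{\partial t}(t^2) = 2t^{2m+1}$ and $t^{2m}\frac{\partial}{\partial t}(t^{2m+1}) = (2m+1)t^{4m}$, both landing back in $k[t^2,t^{2m+1}]$. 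So the leading symbol of $\tau$ is (a scalar multiple of) $t^{2m}\frac{\partial}{\partial t}$ acting on $\gr A$.

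Finally, since $\Dh = A\tau$ as an $A$-module and the filtration on $\Dh$ is by $\deg(g\tau) = \deg(g) + 2m-1$, the associated graded is $\gr\Dh = (\gr A)\cdot\mathrm{LT}(\tau)$ as a $\gr A$-module — here one must check that the leading term map $\mathrm{LT}: \Dh \to \Der k[t]$ is multiplicative over $\gr A$, i.e. $\mathrm{LT}(g\tau) = \mathrm{LT}(g)\mathrm{LT}(\tau)$, which again holds because no leading-term cancellation occurs. Therefore $\gr\Dh$ is identified with the $\gr A$-submodule of $\Der k[t]$ generated by $t^{2m}\frac{\partial}{\partial t}$ (absorbing the scalar), which is the claim.

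The main obstacle I anticipate is the bookkeeping of ``no leading-term cancellation,'' which is what makes all three multiplicativity statements ($\mathrm{LT}$ of a product of functions, of a vector field applied to a function, and of $g\tau$) work: one needs that $\deg$ is genuinely additive on $A$ — already granted in the excerpt from $\deg h(x) = 2\deg y$ — and that $\tau(u) \neq 0$ with the exact degree shift $2m-1$ for all non-constant $u$, so that leading terms of the two Leibniz summands have equal degree and add rather than cancel (and are each nonzero). Once that is in hand, the identification with $t^{2m}\frac{\partial}{\partial t}$ is a short direct check on the two algebra generators $x$ and $y$.
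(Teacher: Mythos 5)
Your proof is correct: it is precisely the ``direct'' verification that the paper omits (the authors state only that the proof is straightforward), namely checking that the filtration-compatibility $\eta(A_s)\subset A_{s+k}$ for $\eta\in\Dh^k$ gives a well-defined graded action, that the exact degree additivity $\deg(g\tau(f))=\deg(f)+\deg(g)+2m-1$ rules out leading-term cancellation in the Leibniz identity and in $\mathrm{LT}(g\tau)=\mathrm{LT}(g)\mathrm{LT}(\tau)$, and that the symbol of $\tau$ is pinned down by its values on the algebra generators $t^2$, $t^{2m+1}$ to be a scalar multiple of $t^{2m}\frac{\partial}{\partial t}$. The only point you gloss over is that the two proportionality constants you obtain from $D(t^2)\sim t^{2m+1}$ and $D(t^{2m+1})\sim t^{4m}$ must be compatible, but this is automatic since the induced map respects the relation $(t^{2m+1})^2=(t^2)^{2m+1}$ (and a direct computation gives the common scalar $2^{m-1}$), so nothing is missing.
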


In the theory of simple finite-dimensional Lie algebras semisimple and nilpotent elements play an important role. Here we show that simple Lie algebra $\Dh$ has no nilpotent or semisimple elements.

\begin{thm}\label{thm-hyper}
Let $\HH$ be a non-singular hyperelliptic curve over an algebraically closed field $k$ of characteristic zero. Let $\Dh$ be the Lie algebra of polynomial vector fields on $\HH$. 
Let $\eta$ be a non-zero vector field in $\Dh$.
Then

(1) $\Ker \ad (\eta) = k \eta$.

(2) $\eta \not\in \IM \ad (\eta)$.

(3) $\Dh$ has no non-trivial semisimple elements.

(4) $\Dh$ has no non-trivial nilpotent elements.

\end{thm}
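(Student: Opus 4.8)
The plan is to work with the degree filtration and the associated graded Lie algebra $\gr\Dh$, which by the preceding Lemma embeds into $\Der k[t]$ as the $\gr A$-submodule generated by $t^{2m}\frac{\partial}{\partial t}$; concretely $\gr\Dh$ is spanned by $\{ t^{2k+2m}\frac{\partial}{\partial t}, t^{2k+2m+2m-1}\cdot(\text{correction}) \}$, but the essential point is that its elements are of the form $p(t)\frac{\partial}{\partial t}$ with $p(t)$ of degree $\geq 2m$, so no nonzero element has a constant or linear leading term. First I would prove (1): if $\eta,\zeta\in\Dh$ are nonzero with $[\eta,\zeta]=0$, pass to leading terms. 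Writing $\eta = g\tau$, $\zeta = g'\tau$ in the free module $\Dh = A\tau$, the bracket formula (\ref{bracket}) gives $[g\tau, g'\tau] = (g\tau(g') - g'\tau(g))\tau$, so $[\eta,\zeta]=0$ forces $g\tau(g') = g'\tau(g)$ in $A$. Comparing degrees and leading terms in $\gr A\subset k[t]$, and using that $\gr A$ and $k[t]$ are integral domains, one sees $\mathrm{LT}(g), \mathrm{LT}(g')$ must be proportional powers of $t$; a short induction peeling off leading terms then shows $g' \in k\,g$, i.e. $\zeta\in k\eta$, hence $\Ker\ad(\eta) = k\eta$.

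Next, for (2), suppose $\eta = [\eta,\zeta]$ for some $\zeta\in\Dh$; again write $\eta=g\tau$, $\zeta=g'\tau$, so $g = g\tau(g') - g'\tau(g)$. Take degrees: $\deg(g) = \deg\big(g\tau(g') - g'\tau(g)\big)$. But $\deg(g\tau(g')) = \deg(g)+\deg(g')+2m-1$ and $\deg(g'\tau(g)) = \deg(g')+\deg(g)+2m-1$ are equal, so the leading terms might cancel; if they do not cancel, the right side has degree $\deg(g)+\deg(g')+2m-1 > \deg(g)$ since $\deg(g')\ge 0$ and $2m-1\ge 1$, a contradiction. The subtle case is cancellation of the leading terms, i.e. when $\mathrm{LT}(g\tau(g')) = \mathrm{LT}(g'\tau(g))$ in $\gr A$; here I would argue in $\gr\Dh$ directly — the image of $\zeta$ in $\gr\Dh$ is some $p(t)\frac{\partial}{\partial t}$ with $\deg p \ge 2m$, and in $\Der k[t]$ one computes that the bracket of $q\frac{\partial}{\partial t}$ (the leading term of $\eta$) with $p\frac{\partial}{\partial t}$ has $t$-degree $\deg p + \deg q - 1 \ge \deg q + 2m - 1 > \deg q$, so the leading term of $[\eta,\zeta]$ strictly exceeds that of $\eta$ unless it is zero, and it is zero only if $p\frac{\partial}{\partial t}$ and $q\frac{\partial}{\partial t}$ are proportional, which is impossible for degree reasons unless $\zeta$ has strictly smaller degree — and iterating this degree bookkeeping rules out $\eta\in\IM\ad(\eta)$ entirely. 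This cancellation analysis is the main obstacle; it requires care because the degree filtration on $A$ is not a grading, so leading-term cancellation is genuinely possible and must be handled by passing to $\gr\Dh \hookrightarrow \Der k[t]$ where everything is homogeneous.

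Finally, (3) and (4) follow formally from (1) and (2) together with the definitions of semisimple and nilpotent elements in a Lie algebra. An element $\eta$ is \emph{nilpotent} if $\ad(\eta)$ is a nilpotent operator; then $\eta = \ad(\eta)(\zeta)$ is impossible for $\eta\ne 0$ would need... more precisely, if $\ad(\eta)^n = 0$ with $\eta\ne 0$, pick the largest $k$ with $\ad(\eta)^k\ne 0$ and a $\zeta$ with $\ad(\eta)^k\zeta\ne 0$; then $\xi := \ad(\eta)^{k-1}\zeta$ satisfies $\ad(\eta)\xi \ne 0$ but $\ad(\eta)^2\xi = 0$, and one checks $\ad(\eta)\xi \in \Ker\ad(\eta) = k\eta$ by (1), so $\ad(\eta)\xi = c\eta$ with $c\ne 0$, i.e. $\eta = \ad(\eta)(c^{-1}\xi) \in \IM\ad(\eta)$, contradicting (2). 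Hence $\Dh$ has no nonzero nilpotent elements. For (3): an element $\eta$ is \emph{semisimple} if $\ad(\eta)$ acts semisimply (diagonalizably), in particular $\Dh = \Ker\ad(\eta) \oplus \IM\ad(\eta)$; but by (1) $\Ker\ad(\eta) = k\eta$ is one-dimensional, and $\eta\notin\IM\ad(\eta)$ by (2) would be consistent with the direct sum — so instead one uses that $\eta \in \Ker\ad(\eta)$ always, and if $\ad(\eta)$ is semisimple with $\eta$ in the $0$-eigenspace, the $0$-eigenspace $k\eta$ is $\ad(\eta)$-invariant and complemented; restricting, any Jordan-type argument collapses, but the cleanest route is: a nonzero semisimple element would generate (with a suitable bracket partner) an $\mathfrak{sl}_2$ or at least give $\eta\in[\eta,\Dh]=\IM\ad(\eta)$ after projecting along the eigenspace decomposition, again contradicting (2). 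I would phrase (3) precisely as: since $\Dh$ is infinite-dimensional one takes the abstract definition that $\eta$ is semisimple iff $\ad\eta$ is diagonalizable; then $\Dh = \bigoplus_\lambda \Dh_\lambda$, $\eta\in\Dh_0 = \Ker\ad\eta = k\eta$, and $\IM\ad\eta = \bigoplus_{\lambda\ne 0}\Dh_\lambda$ is a complement to $k\eta$; this does not immediately contradict (2), so the real content is that a diagonalizable $\ad\eta$ with one-dimensional kernel forces, via bracketing eigenvectors, the relation $\eta = [\eta, \zeta]$ for some $\zeta$ — this last step I expect to need a brief separate argument, again reduced to the graded picture, and it is the second place where care is needed.
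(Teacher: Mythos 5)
Your parts (1), (2) and (4) follow essentially the same route as the paper: (1) and (2) via the degree filtration and the embedding $\gr \Dh \hookrightarrow \Der k[t]$, and (4) by producing a nonzero element of $\IM \ad (\eta) \cap \Ker \ad (\eta) = k\eta$ from nilpotency of $\ad(\eta)$. In (2) you are in fact more careful than the paper, which simply asserts $\deg [\eta,\nu] \geq \deg \eta + 2m-1$: as you observe, the filtration only bounds the degree of a bracket from above, and the leading terms cancel precisely when they are proportional, i.e.\ when $\deg \nu = \deg \eta$. Your fix --- subtract a scalar multiple of $\eta$ from $\zeta$ to strictly lower its degree (using that the graded components are one-dimensional) without changing $[\eta,\zeta]$, then rerun the degree count --- closes that case correctly.

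The genuine gap is in (3), and you half-diagnose it yourself. Your stated target there is to derive $\eta = [\eta,\zeta]$ from semisimplicity of $\eta$, but this is structurally impossible: if $\ad(\eta)$ is diagonalizable then $\Dh = \bigoplus_\lambda \Dh_\lambda$ with $\eta \in \Dh_0$ and $\IM \ad(\eta) = \bigoplus_{\lambda \neq 0} \Dh_\lambda$, so $\eta \notin \IM \ad(\eta)$ holds automatically and no amount of bracketing eigenvectors will put $\eta$ in the image. The correct move --- and the one the paper makes --- is to apply (2) to an eigenvector rather than to $\eta$ itself. By (1) the zero eigenspace is $k\eta$, which is one-dimensional, while $\Dh$ is infinite-dimensional; hence a nontrivial semisimple $\eta$ must admit an eigenvector $\nu \neq 0$ with $[\eta,\nu] = \lambda \nu$ and $\lambda \neq 0$. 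But then $\nu = [\nu, -\lambda^{-1}\eta] \in \IM \ad(\nu)$, contradicting (2) applied to $\nu$. Equivalently: (2) shows that no $\ad(\nu)$ has a nonzero eigenvalue, so a diagonalizable $\ad(\eta)$ would have to vanish, forcing $\Dh = \Ker \ad(\eta) = k\eta$ by (1), which is absurd. With this one-line substitution your argument for (3) is complete.
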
 
\begin{proof}
To prove (1) we note that $\eta \in \Ker \ad (\eta)$. If $\Ker \ad (\eta)$ has dimension greater than one then there would exist a non-zero $\nu  \in \Ker \ad (\eta)$ with $\deg \nu \neq \deg \eta$. Projecting $\eta$ and $\nu$ into $\gr \Dh$ we would get two 
commuting non-proportional homogeneous elements in $\gr \Dh \subset \Der k[t]$, which is not possible.

Part (2) follows from the fact that the smalest degree of a non-zero element in $\Dh$ is 
$2m - 1$, hence $\deg [\eta, \nu] \geq \deg \eta + 2m - 1 > \deg \eta$.

It follows from part (2) that a relation $[\nu, \eta] = \lambda \eta$ with $\lambda \in k$,
$\lambda \neq 0$, can not hold for non-zero elements in $\Dh$. This proves (3).

If $\ad \eta$ is nilpotent then there exists a non-zero $\nu \in \IM \ad \eta$ such that 
$[\eta, \nu] = 0$. But by part (1), $\nu$ must be a multiple of $\eta$, which would contradict (2).
\end{proof}

\begin{rem}
Olivier Mathieu communicated to us that it is possible to classify the curves with nilpotent and semisimple derivations by considering the corresponding automorphisms.
\end{rem}

\section{Vector fields on linear algebraic groups}

Let $G$ be a linear algebraic group over $k$. Consider the algebra $A$ of polynomial functions on $G$ and the Lie algebra $\DG$ of polynomial vector fields on $G$. Since $G$ is a smooth affine variety, the Lie algebra $\DG$ is simple.

 The algebra $A$ of functions on $G$ is a Hopf algebra with a comultiplication
\break
$\Delta: \, A \rightarrow A \otimes A$. We shall write
$$\Delta(f) = \sum_i f_i^{(1)} \otimes f_i^{(2)}, \ \text{\ for \  } f\in A.$$
The comultiplication is coassociative: $(\Delta \otimes \id) \circ \Delta = (\id \otimes \Delta) \circ \Delta$, yielding a map $A \rightarrow A \otimes A \otimes A$. We shall write the image of
$f \in A$ under this map as $\sum_i f_i^{[1]} \otimes f_i^{[2]} \otimes f_i^{[3]}$.

 The group $G$ acts on $A$ by left and right shifts:
$$ (L_x f) (y) = f(x^{-1}y),$$
$$ (R_x f) (y) = f(yx), \text{\ for \  } f\in A, \ x,y \in G.$$

 Left and right shifts may be expressed via coproduct (see \cite{Hu}, Section 9.3):
\begin{equation}
\label{leftshift}
L_{x^{-1}} f = \sum_i f_i^{(1)} (x) f_i^{(2)},
\end{equation}
\begin{equation}
\label{rightshift} 
R_{x} f = \sum_i f_i^{(2)} (x) f_i^{(1)}.
\end{equation}

 The group $G$ acts on $\DG$ by conjugation with left (resp. right) shifts:
$$C^L: \, G \times \DG \rightarrow \DG,$$
where $C^L_x \eta = L_{x} \circ \eta \circ L_{x^{-1}}$
(resp. $C^R_x \eta = R_{x} \circ \eta \circ R_{x^{-1}}$).

 Lie algebra $\DG$ has two finite-dimensional subalgebras, $\LL^L$ and $\LL^R$, of left- and 
right-invariant vector fields. The Lie algebra $\LL^L$ (resp. $\LL^R$) consists of those vector fields in $\DG$ that commute with the left shifts $L_x$ (resp. right shifts $R_x$) for all 
$x \in G$.  

 The Lie algebras $\LL^L$ and $\LL^R$ are finite-dimensional. Each of them can be identified with the tangent space to $G$ at identity element $e \in G$.

 Let $\m_e \triangleleft A$ be the maximal ideal of functions vanishing at $e$. The tangent space to $G$ at $e$ can be defined as $\left( \me / \me^2 \right)^*$. For a tangent vector 
$\varphi \in \Te$ we define the operation of the directional derivative $\widehat \varphi:
\, A \rightarrow k$, where $\widehat{\varphi} (f) = \varphi \left( f - f(e) 1 \right)$.
This directional derivative satisfies $\wv (fg) = g(e) \wv(f) + f(e) \wv(g)$.

 We have the isomorphisms (see Section 9.2 in \cite{Hu}):  
$$ \theta_L : \, \Te \rightarrow \LL^L, $$
$$ \theta_R : \, \Te \rightarrow \LL^R, $$
where for $\varphi \in \Te$, $f \in A$ and $x \in G$,
$$\left( \theta_L (\varphi) f \right) (x) = \wv \left( L_{x^{-1}} f \right),$$
$$\left( \theta_R (\varphi) f \right) (x) = \wv \left( R_x f \right).$$

 It is easy to check that $\theta_L (\varphi)$ (resp. $\theta_R (\varphi)$) is a left- (resp. right- ) invariant vector field.

 Note that the sublagebras $\LL^L, \LL^R \subset \DG$ commute with each other. Indeed, using
(\ref{leftshift})-(\ref{rightshift}), as well as coassociativity, we get for $\varphi, \psi \in \Te$:
\begin{equation}
\theta_L (\varphi) \theta_R (\psi) f = \theta_R (\psi) \theta_L (\varphi) f =
\sum_i \widehat{\psi} (f_i^{[1]}) \widehat{\varphi} (f_i^{[3]}) f_i^{[2]}.
\end{equation}   

Lie groups are parallelizable manifolds. For linear algebraic groups, an algebraic counterpart of this geometric statement is the following result which can be found 
in \cite{KMRT}, Proposition 21.3. We include the proof for convenience of the reader.

\begin{thm}\label{thm-alggr}
\label{freeleft}
Let $G$ be a linear algebraic group. Then the Lie algebra $\DG$ of polynomial vector fields on 
$G$ is a free $A$-module, generated by $\LL^L$ (resp. $\LL^R$):
$$ \DG \cong A \otimes \LL^L \cong A \otimes \LL^R .$$
\end{thm}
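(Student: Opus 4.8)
The plan is to construct explicit mutually inverse $A$-module maps between $\DG$ and $A \otimes_k \LL^L$, using the left-invariant vector fields as a ``global frame.'' First I would fix a basis $\varphi_1, \ldots, \varphi_d$ of the tangent space $\Te$, so that $\xi_\alpha := \theta_L(\varphi_\alpha)$, $\alpha = 1, \ldots, d$, form a $k$-basis of $\LL^L$; here $d = \dim G$. The natural candidate for the map $A \otimes \LL^L \to \DG$ is $f \otimes \xi \mapsto f\xi$, i.e. $\sum_\alpha f_\alpha \otimes \xi_\alpha \mapsto \sum_\alpha f_\alpha \xi_\alpha$, which is visibly a well-defined $A$-module homomorphism into $\DG$ since $\DG$ is a left $A$-module. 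The entire content of the theorem is that this map is bijective, equivalently that $\xi_1, \ldots, \xi_d$ form an $A$-module basis of $\DG$.

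To prove bijectivity I would argue pointwise, exploiting that $G$ acts transitively on itself by (right) translations. The key fact is that for each point $x \in G$ the specializations $\xi_1(x), \ldots, \xi_d(x)$ form a $k$-basis of the tangent space $T_x G$: at the identity $e$ this holds by construction ($\xi_\alpha(e)$ corresponds to $\varphi_\alpha$ under $\theta_L$ followed by evaluation, see Section 9.2 of \cite{Hu}), and since $\LL^L$ consists of left-invariant fields, $C^R_x$ (or the relevant translation) carries the frame at $e$ to the frame at $x$, so linear independence propagates to every point. Now given $\eta \in \DG$, for each $x \in G$ there are unique scalars $c_\alpha(x) \in k$ with $\eta(x) = \sum_\alpha c_\alpha(x)\, \xi_\alpha(x)$ in $T_x G$; the point is that the functions $c_\alpha$ so defined are regular, i.e. lie in $A$. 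This follows because by Cramer's rule the $c_\alpha$ are obtained from the entries of $\eta$ and of the $\xi_\alpha$ (all elements of $A$) divided by the determinant of the frame matrix, and that determinant is a nowhere-vanishing function on $G$, hence a unit in $A$ since $G$ is an affine variety on which the only nowhere-vanishing regular functions form $A^\times$ — more concretely, a regular function invertible at every point generates the unit ideal by the Nullstellensatz, so its inverse is regular. Then $\eta - \sum_\alpha c_\alpha \xi_\alpha$ is a vector field vanishing at every point of $G$, hence is zero, giving existence; uniqueness of the $c_\alpha$ is immediate from pointwise linear independence. This establishes $\DG \cong A \otimes \LL^L$, and the argument with right shifts in place of left shifts gives $\DG \cong A \otimes \LL^R$ identically.

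The main obstacle is the regularity claim for the coefficient functions $c_\alpha$: one must be careful that the frame determinant is genuinely a unit in $A$ and not merely a function avoiding zeros in some ambient space. I would handle this by writing the $\xi_\alpha$ in coordinates coming from an embedding $G \subset \A^n$, forming the $n \times d$ matrix whose columns are the $\xi_\alpha$, noting that its maximal minors generate the unit ideal of $A$ (because at each $x$ some $d \times d$ minor is nonzero, then invoke Hilbert's Nullstellensatz exactly as in the proof of Theorem \ref{main}), and then solving the linear system $\eta = \sum_\alpha c_\alpha \xi_\alpha$ locally on the distinguished opens where a fixed minor is invertible and checking the local solutions glue. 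Alternatively, and more cleanly, one can pull everything back to the identity: the isomorphism $\theta_L$ together with coassociativity lets one write, for $\eta \in \DG$, an explicit formula $c_\alpha = $ (directional derivative data of $\eta$ translated to $e$), expressed through the coproduct as in equation (\ref{leftshift}), which manifestly lands in $A$; this is the approach I expect the paper to take, and it sidesteps the gluing entirely.
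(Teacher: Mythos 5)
Your proposal is correct, but the argument you actually elaborate is not the one the paper uses. The paper never argues pointwise: it writes down an explicit inverse $\delta$ to the multiplication map $\epsilon$, built from
$\Gamma(\eta\otimes f)(x) = \bigl( (C^L_{x^{-1}}\eta) f\bigr)(e) = \eta(L_x f)(x)$
for $f\in\m_e/\m_e^2$, i.e.\ it conjugates $\eta$ back to the identity and reads off its components against a basis of $\m_e/\m_e^2$ and the dual basis of $\Te$; the identities $\epsilon\circ\delta=\id$ and $\delta\circ\epsilon=\id$ are then verified directly ($A$-linearity reduces the latter to checking $\Gamma(\theta_L(\varphi_j)\otimes f_i)=\delta_{ij}$). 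This is exactly the ``pull everything back to the identity'' route you sketch in your last sentence but do not carry out, and it indeed needs no Nullstellensatz and no gluing. Your primary argument --- that the left-invariant fields give a frame that is a basis of $T_xG$ at every point, so the coefficients of any $\eta\in\DG$ are obtained by Cramer's rule on the distinguished opens where some maximal minor is invertible, and glue to elements of $A$ --- is sound, and to your credit you identify and repair its weak point (there is no single determinant for an $n\times d$ coordinate matrix, so one must use that the $d\times d$ minors generate the unit ideal and then glue). What it costs you relative to the paper is extra machinery: the sheaf property of regular functions on an affine variety, the Nullstellensatz, and the fact that a vector field vanishing at every point is zero (which uses that $A$ is reduced); what it buys is a transparent geometric picture (parallelizability) that works verbatim for any variety admitting a global frame, not just a group. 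The two small things to tighten if you keep your route: state explicitly that left translation by $x$ is an isomorphism of varieties so that its differential carries the basis of $T_eG$ to a basis of $T_xG$, and justify existence of the decomposition $\eta=\sum_\alpha c_\alpha\xi_\alpha$ as an identity in $\DG$ (not merely pointwise) via reducedness of $A$.
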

\begin{proof}
The map $\epsilon: \, A \otimes \LL^L \rightarrow \DG$ is just a multiplication of a vector field by a function.
To establish an isomorphism, we need to construct its inverse $\DG \rightarrow A \otimes \LL^L$.

First, let us define the map
$$\Gamma: \, \DG \otimes \Tf \rightarrow A,$$
$$ \Gamma (\eta \otimes f) (x) = \left( (C^L_{x^{-1}} \eta) f \right) (e) = \eta (L_x f) (x),$$
where $\eta\in\DG$, $f \in \m_e$, $x \in G$. It is easy to see that this map is well-defined,
if $f = f_1 f_2 \in \m_e^2$ then $(C^L_{x^{-1}} \eta) (f_1 f_2) = f_2 (C^L_{x^{-1}} \eta) (f_1) +
f_1 (C^L_{x^{-1}} \eta) (f_2)$, which evaluates at $e$ to $0$.

 Fix a basis $\{ f_1, \ldots, f_n \}$ of $\Tf$ and a dual basis $\{ \varphi_1, \ldots , \varphi_n \}$ of $\Te$. Then for $f \in \m_e$,
$$ f = \sum_{i=1}^n \varphi_i (f) f_i \ \ \mod \m_e^2 .$$

We construct the map $\delta: \, \DG \rightarrow A \otimes \LL^L$ as
$$\delta(\eta) = \sum_{i=1}^n \Gamma ( \eta \otimes f_i) \otimes \theta_L (\varphi_i).$$

Let us show that $\delta$ is the inverse map to $\epsilon$. One easily checks that
 $\epsilon(\delta (\eta)) = \eta$.
  Since both $\epsilon$ and $\delta$ are $A$-linear, to show that $\delta \circ \epsilon = \id$, it is sufficient to check that $\delta (\epsilon ( 1 \otimes \theta_L (\varphi_j) )) = 
\theta_L (\varphi_j)$ . 
This is equivalent to showing that $\Gamma (\theta_L (\varphi_j) \otimes f_i )$ equals 1 if $i=j$ and 0 otherwise. We have
$$\Gamma (\theta_L (\varphi_j) \otimes f_i ) (x) = \theta_L (\varphi_j) (L_x f_i) (x)$$
$$= \varphi_j \left( L_{x^{-1}} L_x f_i - f_i (e) 1 \right) = \varphi_j (f_i),$$
which yields the desired result.
\end{proof}

We point out that for reductive groups the structure of $\DG$ as an $\LL^L$-module with respect to the adjoint action, can be obtained from Theorem \ref{freeleft} together with the Peter-Weyl theorem:

\begin{thm} (\cite{GW}, Theorem 4.2.7). Let $G$ be a reductive linear algebraic group. Let $\widehat{G}$ denote the set of equivalence classes of finite-dimensional polynomial representations of $G$. For $\lambda \in \widehat{G}$ denote by $V_\lambda$ the corresponding $G$-module.
 Then the algebra $A$ of polynomial functions on $G$, viewed as a $G \times G$-module (with respect to left and right action) decomposes in the following way:
$$A = \mathop\oplus\limits_{\lambda \in \widehat{G}} V_\lambda \otimes V_\lambda^* .$$ 
\end{thm}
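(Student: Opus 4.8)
The plan is to realize every regular function on $G$ as a matrix coefficient and then invoke complete reducibility; this is the algebraic Peter--Weyl theorem and I will only sketch it, the full details being in \cite{GW}. For a finite-dimensional polynomial representation $(\rho, V)$ of $G$ I would introduce the matrix-coefficient map $\Phi_V : V^* \otimes V \to A$, $\bigl(\Phi_V(\xi\otimes v)\bigr)(x) = \xi(\rho(x)v)$. A direct computation using $(L_x f)(y)=f(x^{-1}y)$ and $(R_y f)(x)=f(xy)$ shows that $\Phi_V$ is a morphism of $G\times G$-modules, where the first (resp. second) copy of $G$ acts on $V^*\otimes V$ through $\rho^*$ on the left factor (resp. $\rho$ on the right factor) and on $A$ by left (resp. right) shifts; in Hopf-algebraic language this is exactly what formulas (\ref{leftshift})--(\ref{rightshift}) encode. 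Note that the shift action makes $A$ a locally finite $G\times G$-module, since it is induced by the comultiplication.

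Next I would prove that $\Phi_{V_\lambda}$ is injective for $\lambda\in\widehat G$. Since $G$ is reductive and $\ch k = 0$ we have $\End_G(V_\lambda)=k$, so $V_\lambda^*\otimes V_\lambda$ is an irreducible $G\times G$-module; as $\Phi_{V_\lambda}$ is nonzero, $\Ker \Phi_{V_\lambda}=(0)$ by Schur's lemma, giving an embedding $V_\lambda\otimes V_\lambda^*\hookrightarrow A$ (reorder the factors to match the statement). Moreover the images $\Phi_{V_\lambda}(V_\lambda^*\otimes V_\lambda)$, $\lambda\in\widehat G$, are in direct sum: a nonzero intersection of two of them would be a common $G\times G$-submodule of two non-isomorphic irreducibles, and the same argument with isotypic components handles arbitrary finite sums.

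The crux is surjectivity, i.e. $A=\sum_{\lambda\in\widehat G}\IM\Phi_{V_\lambda}$. Given $f\in A$, formula (\ref{rightshift}) shows $R_x f\in\Span\{f_i^{(1)}\}$ for every $x\in G$, so $V:=\Span\{R_x f\mid x\in G\}$ is a finite-dimensional polynomial right $G$-module with $f\in V$. Evaluation at $e$ defines $\xi_e\in V^*$, $\xi_e(h)=h(e)$, and then $\bigl(\Phi_V(\xi_e\otimes f)\bigr)(x)=\xi_e(R_x f)=(R_x f)(e)=f(x)$, so $f=\Phi_V(\xi_e\otimes f)\in\IM\Phi_V$. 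Decomposing $V\cong\bigoplus_\lambda V_\lambda^{\oplus m_\lambda}$ (complete reducibility again) and using that $\Phi$ is additive in $V$ with $\IM\Phi_{V_\lambda^{\oplus m}}=\IM\Phi_{V_\lambda}$, we obtain $\IM\Phi_V\subset\sum_\lambda\IM\Phi_{V_\lambda}$, hence $f$ lies in that sum. Combining the three steps gives $A=\bigoplus_{\lambda\in\widehat G}V_\lambda\otimes V_\lambda^*$ as $G\times G$-modules. I expect the main obstacle to be cleanly packaging the two standard inputs — local finiteness of the shift action and complete reducibility of finite-dimensional polynomial representations of a reductive group in characteristic zero; once these are granted, the rest is routine bookkeeping with matrix coefficients.
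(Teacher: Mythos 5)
The paper does not actually prove this theorem: it is quoted from \cite{GW} (Theorem 4.2.7) purely as an input for describing $\DG$ as an $\LL^L$-module, so there is no in-paper argument to compare yours against. Your sketch is the standard matrix-coefficient proof of the algebraic Peter--Weyl theorem---essentially the argument of the cited reference---and it is correct: the equivariance of $\Phi_V$, injectivity on $V_\lambda^*\otimes V_\lambda$ via Schur's lemma and irreducibility of the outer tensor square, directness of the sum of pairwise non-isomorphic isotypic pieces, and surjectivity via $f=\Phi_V(\xi_e\otimes f)$ on the finite-dimensional span of right translates are all sound, and the two inputs you defer (local finiteness of the shift action, which indeed follows from (\ref{leftshift})--(\ref{rightshift}), and complete reducibility of polynomial representations of a reductive group in characteristic $0$) are legitimately standard, the latter being the one genuinely nontrivial step and the only place reductivity is used, since $\End_G(V_\lambda)=k$ already follows from $k$ being algebraically closed. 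One small reading note: $\widehat G$ in the statement must be understood as the set of classes of \emph{irreducible} polynomial representations, as your appeal to Schur's lemma presumes.
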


\end{document}